\newtheorem{theorem}{Theorem}[section]
\newtheorem{lemma}[theorem]{Lemma}
\theoremstyle{definition}
\newtheorem{definition}[theorem]{Definition}
\newtheorem{remark}[theorem]{Remark}
\newtheorem{example}[theorem]{Example}
\theoremstyle{parrafo}
\begin{document}

\title[]{Variations on the Boman covering lemma}

\author{J. M. Aldaz}
\address{Instituto de Ciencias Matem\'aticas (CSIC-UAM-UC3M-UCM) and Departamento de 
Matem\'aticas,
Universidad  Aut\'onoma de Madrid, Cantoblanco 28049, Madrid, Spain.}
\email{jesus.munarriz@uam.es}
\email{jesus.munarriz@icmat.es}

\thanks{2000 {\em Mathematical Subject Classification.} 42B25}

\thanks{The author was partially supported by Grant MTM2015-65792-P of the
MINECO of Spain, and also by by ICMAT Severo Ochoa project SEV-2015-0554 (MINECO)}

%\thanks{Supported by DGICYT 1317 DP Spain}

%\subjclass{}

%\keywords{}

%\date{}

%\dedicatory{}

%\commby{} 

%%% ----------------------------------------------------------------------
\begin{abstract} We explore some variants of the Boman covering lemma, and  their relationship to the
boundedness properties of the maximal operator. Let $1 < p < \infty$ and let $q$ be its conjugate exponent.
We prove that the strong type
$(q,q)$ of the uncentered maximal operator, by itself, implies 
 certain generalizations of the Boman covering lemma for the exponent $p$, and in turn, these
 generalizations  entail the
 weak type 
$(q,q)$ of the centered maximal operator.  We show by example that it
is possible  for the uncentered maximal operator to be unbounded for all $1 < s < \infty$, while the conclusion
of the lemma holds for every $1 < p < \infty$; thus,  the latter condition is much weaker. 
Also, the boundedness of the centered maximal
operator  entails weak versions of the lemma.
\end{abstract}

%%% ----------------------------------------------------------------------

\maketitle

%%% ----------------------------------------------------------------------

\markboth{J. M. Aldaz}{The Boman Covering Lemma}

\section {Introduction}

Often called ``a useful lemma" or a ``rather well-known lemma", the Boman covering lemma appears for the first
time in \cite{Bom}, but  \cite{Boj} seems to be the standard reference  (I have not have access to  \cite{Bom}, a technical report).
 Hence,  I will refer to
 \cite{Boj}  in what follows. 
 
 Closely connected to what nowadays is called
 the Boman chain condition, the lemma has been used to extend inequalities
 of Sobolev and Poincar\'e type to a wide class of domains  
 (both in euclidean spaces and in several kinds of metric measure spaces)  to explore the fine
 properties of functions belonging to different function spaces, and even to seemingly unrelated areas, 
 such as  complex dynamics. Among the many papers
 that could be cited here, we mention 
 \cite{IwNo},  \cite{Ch},  \cite{Ch1},  \cite{ChWh},  \cite{IhVa}, \cite{HeKo}, \cite{KoShSt}.

In view of its usefulness it seems advisable, first, to name the lemma after its author, and
second, to explore possible variations and generalizations, studying their relationships  
with the  boundedness properties of maximal operators, both centered and
uncentered.

Given $1 < p < \infty$ and  Lebesgue measure on $\mathbb{R}^d$,
the Boman covering Lemma tells us
that the overlap of cubes (with sides parallel to the coordinate axes) 
when we increase their radii, grows in a controlled manner
in the $L^p$ norm (see Lemma \ref{Bojarski} below for the precise
statement). The result is an immediate consequence of H\"older's inequality and
the boundedness of the uncentered maximal operator.  Of course, the same proof works for metric measure spaces
with a doubling measure. 

Here we study alternative formulations  of the lemma where doubling is not assumed.
 After some fairly standard definitions in Section 2, 
Section 3 considers variants of Boman's lemma.  A normalized version is presented 
in Theorem \ref{normalizedBoman}, and  in 
Theorem \ref{generalizedBoman}, 
a generalization  where the smaller sets need not be balls, but can
be any measurable sets with positive measure.   
In both cases, the usual proof works without the doubling hypothesis, and the
result is obtained solely from the boundedness of the uncentered maximal operator $M^u$. This is interesting, 
since there are known boundedness results about $M^u$ which do not require doubling, often using instead
strong uniformity assumptions on the measure $\mu$ (see \cite{Io}) or weaker uniform assumptions on $\mu$ but
strong assumptions on the space (cf. \cite{Al1}). Also, assuming only the boundedness of the centered maximal operator $M$,
it is possible to prove a weak version of the lemma, valid only for some sufficiently large expansions, which depend
on the specific sequence of balls being considered, cf. Theorem \ref{centeredBoman}.

Let $p$ and $q$ be dual exponents. Since from the
conclusion of   Theorem \ref{generalizedBoman}
for the exponent $q$,  one can deduce the weak 
 type $(p,p)$ of the centered maximal operator $M$ (cf. Theorem \ref{M}) it is natural to ask what are the relative 
 strenghts of these properties. We prove that
the conclusion of Theorem \ref{normalizedBoman} is strictly weaker than  the strong type $(p,p)$ of $M^u$,
cf. Theorem \ref{overlapnotdoubling}. In the example considered there, 
$M^u$ is unbounded on $L^s$ for all $1 \le s < \infty$, while the 
conclusion of Theorem \ref{normalizedBoman} 
holds for all $1 \le q < \infty$.

Thus,  in principle it should be possible, in cases where the  boundedness
of $M^u$ either fails or is not known,  to obtain boundedness results for the centered maximal
function by directly proving the conclusions of Theorems \ref{normalizedBoman} or 
\ref{generalizedBoman}. 

I am indebeted to an anonymous referee for pointing out relevant bibliography. 

\section {Definitions and notation} 

In this section we introduce the basic definitions. Additional ones will be
presented when needed. We will use $B(x,r) := \{y\in X: d(x,y) < r\}$ to denote open balls, 
$\overline{B(x,r)}$ to denote their topological closures, and 
$B^{cl}(x,r) := \{y\in X: d(x,y) \le r\}$ to refer to metrically closed balls (consider
$\overline{B(0,1)}$ in $\mathbb{Z}$ to see the difference).  It is always assumed that measures are not
identically 0, and radii $r$ are always taken to be $> 0$.

Sometimes results are valid both for collections of open balls, for collections of closed balls, and
for mixed collections. To indicate this, we use the notation $U(x,r)$ for balls when we do not want to
specify whether they are open or closed. Thus,  $U(x,r)$ stands either for $B(x,r)$ or for
$B^{cl}(x,r)$. However, when dealing with contractions or expansions, we take $U(x,r)$
and $U(x,t r)$, $t > 0$, to be both open or both closed. We do  this since sometimes we allow $t=1$
in the statements of results (but the convention could be relaxed if it is assumed that $t\ne 1$).

\begin{definition} A Borel measure is   {\em $\tau$-additive} or {\em $\tau$-smooth}, if for every
collection  $\{U_\alpha : \alpha \in \Lambda\}$
 of  open sets, 
$$
\mu (\cup_\alpha U_\alpha) = \sup_{\mathcal{F}} \mu(\cup_{i=1}^nU_{\alpha_i}),
$$
 where the supremum is taken over all finite subcollections $\mathcal{F} = \{U_{\alpha_1}, \dots, U_{\alpha_n} \}$
of  $\{U_\alpha : \alpha \in \Lambda\}$.
 We say that $(X, d, \mu)$ is a {\em metric measure space} if
$\mu$ is a  $\tau$-additive  Borel measure on the metric space $(X, d)$, such that $\mu$ assigns finite measure
to bounded Borel sets. 
\end{definition} 

From now on we always assume that measures are locally finite (finite on bounded sets). For motivation regarding the definition of metric measure spaces using $\tau$-additivity,
cf. \cite{Al3}. Note that
in separable metric spaces all Borel measures are  $\tau$-additive, and the same happens
with  all Radon measures in arbitrary metric spaces.

\begin{definition}\label{maxfun} Let $(X, d, \mu)$ be a metric measure space and let $g$ be  a locally integrable function 
on $X$. For each  $x\in X$, the centered Hardy-Littlewood maximal operator $M_{\mu}$ 
 is given by
\begin{equation}\label{HLMFc}
M_{\mu} g(x) := \sup _{\{r >0 :  \ 0 < \mu (B^{cl}(x, r))\}}  \frac{1}{\mu
(B^{cl}(x, r))} \int _{B^{cl}(x, r)}  |g| d\mu.
\end{equation}
The uncentered Hardy-Littlewood maximal operator $M^u_{\mu}$ is defined via
\begin{equation}\label{HLMFu}
M^u_{\mu} g(x) := \sup _{\{r > 0, y\in X :  \  d(x,y) < r \mbox{ and } \mu B(y, r)^{cl}  > 0\}} 
\frac{1}{\mu
(B(y, r)^{cl})} \int _{B(y, r)^{cl}}  |g (u)| d\mu (u).
\end{equation}
When the supremum is taken over radii $r > R$, for some fixed $R > 0$, we obtain the following localized maximal operators:  
\begin{equation}\label{HLMFcR}
M_{\{ R < r\}, \mu} \  g(x) := \sup _{\{r >  R : \ 0 < \mu (B^{cl}(x, r))\}}  \frac{1}{\mu
(B^{cl}(x, r))} \int _{B^{cl}(x, r)}  |g| d\mu.
\end{equation}
In the uncentered case, the definition of  $M^u_{\{ R < r\}, \mu} $ is entirely analogous.
\end{definition}

Sometimes it is convenient to  use open balls in the definition,
instead of closed balls. This does not change the values of the Hardy-Littlewood maximal operators. 
Hence, we could also use $U(x,r)$ and take the supremum over both open and closed balls.
The reason why we define their localized versions using $r > R$ instead of
$r\ge R$, is so that the values of  $M_{\{ R < r\}, \mu} $  and  $M^u_{\{ R < r\}, \mu} $  do not change either, when
we use open or closed balls.

If we  consider only one measure $\mu$,  we often omit it from
the notation, writing, for instance, $M$ instead of  $M_{ \mu} $.

\begin{definition} A Borel measure $\mu$ on $(X,d)$ is {\em doubling}  if there exists a 
$C \ge 1 $ such that for all $r>0 $ and all $x\in X$, $\mu (B(x, 2 r)) \le C\mu(B(x,r)) < \infty$. 
\end{definition}

\begin{definition} \label{sub} Let $V, W$ be vector spaces over $\mathbb{R}$ or $\mathbb{C}$,
 let $C \subset V$ be a  cone and let $C^\prime \subset W$ be an ordered cone. We say that $T:C \to C^\prime$ is {\em sublinear} if for all $u, v \in C$ and
all $t > 0$, $T(t u + v) \le t T(u) + T(v)$.
\end{definition}

\begin{definition} \label{weak} Given $p$ with $1 \le p < \infty$,
a sublinear operator $T$ satisfies a
weak type $(p,p)$ inequality if there exists a constant $c > 0$ such that
\begin{equation}\label{weaktypep}
\mu (\{T g > \alpha\}) \le \left(\frac{c \|g\|_{L^p(\mu)}}{\alpha}\right)^p,
\end{equation}
where $c=c(p,  \mu)$ depends neither on $g\in L^p (\mu)$
nor on $\alpha > 0$. The lowest constant $c$ that satisfies the preceding
inequality is denoted by $\|T\|_{L^p\to L^{p, \infty}}$.
Likewise, if there exists a constant $c > 0$ such that
\begin{equation}\label{strongtypep}
\|T g \|_{L^p(\mu)}  \le c \|g\|_{L^p(\mu)},
\end{equation}
we say that $T$ satisfies a
strong type $(p,p)$ inequality.  The lowest such constant (the operator norm) 
 is denoted by $\|T\|_{L^p\to L^{p}}$.
\end{definition}

\section{Variations on the Boman covering lemma} 

Given $1 < p < \infty$, denote by  $q = p/(p-1)$ its conjugate exponent. Next we state the Boman covering lemma 
as presented in \cite[Lemma 4.2]{Boj}. 

\begin{lemma}  \label{Bojarski} {\bf (Boman)}  Let $F = \{Q_\alpha\}$  be an arbitrary family of cubes in 
$\mathbb{R}^d$ (indexed by some parameter $\alpha \in  I$). 
Assume that for each $Q_\alpha$ we are given a non-negative number $a_\alpha$. 
Then for $1 < p < \infty$  and  $ B \ge 1$ we have
$$
\left\|\sum_\alpha a_\alpha \mathbf{1}_{B Q_\alpha}
\right\|_{L^p} 
\le
D
\left\|\sum_\alpha a_\alpha\mathbf{1}_{Q_\alpha}
\right\|_{L^p} 
$$
where the constant $D$ depends on $d, p$ and $B$ only.
\end{lemma}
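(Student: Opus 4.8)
The plan is to reduce the statement to the standard $L^p$ boundedness of the uncentered Hardy--Littlewood maximal operator on $\mathbb{R}^d$ with Lebesgue measure, via a duality argument. Fix $1 < p < \infty$ and let $q = p/(p-1)$ be its conjugate. By the converse of H\"older's inequality it suffices to bound $\int_{\mathbb{R}^d} \big(\sum_\alpha a_\alpha \mathbf{1}_{B Q_\alpha}\big) g \, dx$ for an arbitrary non-negative $g \in L^q$ with $\|g\|_{L^q} \le 1$, by a constant (depending only on $d, p, B$) times $\|\sum_\alpha a_\alpha \mathbf{1}_{Q_\alpha}\|_{L^p}$. Interchanging sum and integral, this quantity equals $\sum_\alpha a_\alpha \int_{B Q_\alpha} g \, dx$.

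The key geometric observation is that for every cube $Q_\alpha$ and every point $x \in Q_\alpha$, the dilated cube $B Q_\alpha$ (same center, side length multiplied by $B \ge 1$) is contained in a ball centered at $x$ of radius comparable to $B \cdot \operatorname{diam}(Q_\alpha)$; hence the average of $g$ over $B Q_\alpha$ is controlled by $M^u g(x)$ up to a dimensional factor. More precisely, $\frac{1}{|Q_\alpha|}\int_{B Q_\alpha} g \, dx \le B^d \cdot \frac{1}{|B Q_\alpha|}\int_{B Q_\alpha} g \, dx \le c(d) B^d \, M^u g(x)$ for every $x \in Q_\alpha$. Therefore $\int_{B Q_\alpha} g \, dx \le c(d) B^d \int_{Q_\alpha} M^u g(x) \, dx$, and summing over $\alpha$ gives
$$
\sum_\alpha a_\alpha \int_{B Q_\alpha} g \, dx \le c(d) B^d \int_{\mathbb{R}^d} \Big(\sum_\alpha a_\alpha \mathbf{1}_{Q_\alpha}(x)\Big) M^u g(x) \, dx.
$$

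Finally I would apply H\"older's inequality to the right-hand side with exponents $p$ and $q$, obtaining the bound $c(d) B^d \, \|\sum_\alpha a_\alpha \mathbf{1}_{Q_\alpha}\|_{L^p} \, \|M^u g\|_{L^q}$, and then invoke the strong type $(q,q)$ boundedness of $M^u$ on $(\mathbb{R}^d, |\cdot|)$ to conclude $\|M^u g\|_{L^q} \le \|M^u\|_{L^q \to L^q}\, \|g\|_{L^q} \le \|M^u\|_{L^q \to L^q}$. Collecting constants yields $D = c(d) B^d \|M^u\|_{L^q \to L^q}$, which depends only on $d, p, B$ as required. The only genuine subtlety is that the family $F$ is arbitrary (possibly uncountable, with overlapping cubes and no measurability assumed on $\sum_\alpha a_\alpha \mathbf{1}_{Q_\alpha}$ a priori): one handles this by first reducing to a countable subfamily — for instance, since the integrals $\int_{B Q_\alpha} g\,dx$ with $g$ a fixed strictly positive $L^q$ function sum to something finite, only countably many terms with $a_\alpha > 0$ matter, or alternatively one works with finite subfamilies and passes to the supremum, using monotone convergence — after which the sums define measurable functions and the computation above goes through verbatim.
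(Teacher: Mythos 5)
Your proof is correct and follows essentially the same route the paper itself uses (it is the duality argument behind Theorem \ref{normalizedBoman}): test against $g\in L^q$, dominate the average of $g$ over the expanded cube by $M^u g$ at each point of the original cube, then apply H\"older and the strong type $(q,q)$ of $M^u$, with the reduction to countable/finite subfamilies handled as in Remark \ref{count}. The only cosmetic difference is that, since the lemma is stated in unnormalized form with cubes, you pick up the harmless factors $B^d$ and $c(d)$ from comparing $Q_\alpha$, $BQ_\alpha$ and circumscribed balls, which the paper's normalized ball version absorbs into the normalization.
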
 

In this section we explore different versions of  Lemma \ref{Bojarski}.
 The next result is a restatement of the lemma in a ``normalized" manner,
so while the proof  is the same (an exercise given two hints, according to \cite[p. 67]{HeKo})  the doubling hypothesis is avoided, 
and the result follows directly
from the  bounds for the uncentered maximal operator (we mention that under stronger
doubling hypotheses, a version of Boman's covering lemma for open sets in
$\mathbb{R}^d$ appears in
\cite[Lemma 2.3]{Bu}). 

Unlike Lemma \ref{Bojarski},
where the expansion factor $B$ is the same for every ball, the following variant allows different
expansion factors $t_n$ for different balls. While  the next result is stated for finite or countable
collections of balls only, rather than arbitrary collections,  we point out  that if the
collection of balls (all of them of positive measure)  is uncountable, and $a_\alpha > 0$ for an uncountable set of indices,
then the right hand side of (\ref{overlap}) is always infinite, and the inequality is trivially satisfied
(more details in remark \ref{count} below)
so there is no real loss of generality in considering only countable (or even finite) collections of balls.

Recall that $U(x,r)$ denotes a ball of center $x$ and radius $r$, without specifying whether it is open or closed
(but contractions or expansions of balls are assumed to have the same character as the original ones). 

\begin{theorem}  \label{normalizedBoman}  Let $(X, d, \mu)$ be a
metric measure space, fix $1 < p < \infty$, and set $q = p/(p-1)$.  Let $\{w_n\}_{n \in I}$ be a 
finite or countably infinite  sequence of weights, so $w_n > 0$,  let
$\{t_n\}_{n \in I}$ be a corresponding
finite or countably infinite  sequence of dilations, so $t_n \ge 1$,
and let $\{U(x_n, r_n)\}_{n \in I}$ be a sequence of balls with 
$\mu U(x_n,r_n) > 0$ for $1 \le n <N$. Then
\begin{equation}\label{overlap}
\left\| \sum_{n \in I}\frac{w_n \mathbf{1}_{U(x_{n},  t_n r_{n})} }{\mu U(x_{n}, t_n r_{n})} \right\|_{L^p (\mu)}
\le
\left\|M^u_{\mu} \right\|_{L^q (\mu) \to L^q  (\mu)} 
\left\| \sum_{n \in I}\frac{w_n \mathbf{1}_{U(x_{n}, r_{n})} }{\mu U(x_{n}, r_{n})} \right\|_{L^p (\mu)}.
\end{equation}
\end{theorem}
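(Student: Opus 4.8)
The plan is to reduce (\ref{overlap}) to the strong type $(q,q)$ bound for $M^u_\mu$ by the usual duality argument, the boundedness of $M^u_\mu$ playing the role that H\"older's inequality combined with the doubling condition plays in the classical proof. First I would invoke duality: since $1<p<\infty$, for every non-negative measurable function $h$ on $X$ one has
\[
\|h\|_{L^p(\mu)}=\sup\Big\{\int_X h\,g\,d\mu \ :\ g\ge 0,\ \|g\|_{L^q(\mu)}\le 1\Big\}.
\]
One may assume the right-hand side of (\ref{overlap}) is finite, as otherwise there is nothing to prove; then both functions occurring in (\ref{overlap}) are supported on the $\sigma$-finite set $\bigcup_{n\in I}U(x_n, t_n r_n)$, so the identity above raises no measure-theoretic difficulty. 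Note also that $U(x_n, r_n)\subseteq U(x_n, t_n r_n)$ for every $n\in I$, since $t_n\ge 1$ and an expansion keeps the same open/closed character by the convention of Section~2; hence $0<\mu U(x_n, r_n)\le\mu U(x_n, t_n r_n)<\infty$ by local finiteness, and the normalizing factors in (\ref{overlap}) are meaningful.

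Next I would establish the pointwise estimate that drives the argument. Fix $g\ge 0$ with $\|g\|_{L^q(\mu)}\le 1$ and fix $n\in I$. By the observation following Definition~\ref{maxfun} (the place where the open/closed-ball conventions of Section~2 come into play), for any ball $V$ with $\mu(V)>0$ and any $y\in V$ one has $\mu(V)^{-1}\int_V g\,d\mu\le M^u_\mu g(y)$; taking $V=U(x_n, t_n r_n)$, which contains $U(x_n, r_n)$, gives
\[
\frac{1}{\mu U(x_n, t_n r_n)}\int_{U(x_n, t_n r_n)}g\,d\mu\ \le\ M^u_\mu g(y)\qquad\text{for all }y\in U(x_n, r_n).
\]
Integrating this inequality in $y$ over $U(x_n, r_n)$, dividing by $\mu U(x_n, r_n)$ and multiplying by $w_n$ yields
\[
\frac{w_n}{\mu U(x_n, t_n r_n)}\int_{U(x_n, t_n r_n)}g\,d\mu\ \le\ \int_X \frac{w_n\,\mathbf{1}_{U(x_n, r_n)}}{\mu U(x_n, r_n)}\,M^u_\mu g\,d\mu.
\]

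Finally I would sum over $n\in I$, which is legitimate by Tonelli's theorem since all terms are non-negative, obtaining
\[
\int_X\Big(\sum_{n\in I}\frac{w_n\,\mathbf{1}_{U(x_n, t_n r_n)}}{\mu U(x_n, t_n r_n)}\Big)g\,d\mu\ \le\ \int_X\Big(\sum_{n\in I}\frac{w_n\,\mathbf{1}_{U(x_n, r_n)}}{\mu U(x_n, r_n)}\Big)M^u_\mu g\,d\mu;
\]
by H\"older's inequality and the strong type $(q,q)$ of $M^u_\mu$, the right-hand side is at most $\|\sum_{n\in I}w_n\mathbf{1}_{U(x_n, r_n)}/\mu U(x_n, r_n)\|_{L^p(\mu)}\,\|M^u_\mu\|_{L^q(\mu)\to L^q(\mu)}\,\|g\|_{L^q(\mu)}$. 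Since $\|g\|_{L^q(\mu)}\le 1$, taking the supremum over all such $g$ in the duality identity (with $h$ the function appearing on the left of (\ref{overlap})) yields (\ref{overlap}).

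I do not expect a serious obstacle: this is essentially the textbook argument, merely rearranged so that the doubling hypothesis is not required. The one point needing genuine care — precisely where doubling would usually intervene — is the admissibility step, namely that $U(x_n, t_n r_n)$ contains $y$ for every $y\in U(x_n, r_n)$ (which uses $t_n\ge 1$) and that the average of $g$ over it is dominated by $M^u_\mu g(y)$; for a closed ball with $y$ on its metric boundary the latter requires the routine limiting argument from local integrability that underlies the observation after Definition~\ref{maxfun}. Beyond this, no property of $M^u_\mu$ other than its $L^q(\mu)$ operator norm is used, so that if that norm is infinite the inequality (\ref{overlap}) holds vacuously.
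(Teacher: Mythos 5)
Your proposal is correct and is essentially the paper's own argument: duality in $L^p$, the observation that the average of $g$ over the dilated ball is dominated by $M^u_\mu g$ at every point of the original ball (hence by its average over that ball), Tonelli, H\"older, and the $L^q$ operator norm of $M^u_\mu$. The extra care you take with the open/closed-ball convention and $\sigma$-finiteness only makes explicit points the paper leaves implicit.
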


\begin{proof} Of course,  if 
$\|M^u_{\mu} \|_{L^q (\mu) \to L^q  (\mu)} = \infty$, 
there is nothing to show, so assume otherwise.  Recalling that $\|f \|_p =  \sup_{\|g\|_q = 1} \int f g,$ we fix $h\ge 0$ with 
$\|h\|_q = 1$. Then
\begin{equation}\label{overlappf}
\int \left( \sum_{n \in I}\frac{w_n \mathbf{1}_{U(x_{n},  t_n r_{n})} }{\mu U(x_{n}, t_n r_{n})}
 \right) h \ d\mu
 =
\sum_{n \in I}\frac{w_n }{\mu U(x_{n}, t_n r_{n})} \int h \mathbf{1}_{U(x_{n},  t_n r_{n})}  \ d\mu
\end{equation}
 \begin{equation}\label{overlappf1}
 \le 
 \sum_{n \in I} w_n \inf_{x\in U(x_{n}, r_{n})} M^u h(x)  
 \le
 \sum_{n \in I}\frac{w_n }{\mu U(x_{n}, r_{n})} \int M^u h \mathbf{1}_{U(x_{n},   r_{n})}  \ d\mu
 \end{equation}
\begin{equation}\label{overlappf2}
=
\int \left( \sum_{n \in I}\frac{w_n \mathbf{1}_{U(x_{n}, r_{n})} }{\mu U(x_{n}, r_{n})} \right) M^u h \ d\mu
\le
\left\| \sum_{n \in I}\frac{w_n \mathbf{1}_{U(x_{n}, r_{n})} }{\mu U(x_{n}, r_{n})} \right\|_p
\left\|M^u_{\mu} h \right\|_q.
\end{equation}
Taking the supremum over all $h\ge 0$ with 
$\|h\|_q = 1$, yields the result.
\end{proof}

\begin{remark} \label{count} Let $(X, d, \mu)$ be a metric measure space and let 
$x\in X$. We use repeatedly the fact that countable unions of countable
collections are countable,  in order to show that if $w_\alpha > 0 $ for uncountably many
indices 
$\alpha$, then
$$
\left\| \sum_{\alpha \in I}\frac{w_\alpha \mathbf{1}_{U(x_{\alpha}, r_{\alpha})} }{\mu U(x_{\alpha}, r_{\alpha})} \right\|_p = \infty.
$$
Note first that there is a  $T > 0$ such that for an uncountable set $I_1$ of  $\alpha$'s, 
$r_\alpha < T$, $w_\alpha > 1/T$, and 
$U(x_{\alpha}, r_{\alpha}) \subset  U (x, T)$. It now follows from Jensen's inequality
that
$$
 \frac{1}{\mu
(U(x, T))^{1/p} }  \left\| \sum_{\alpha \in I}\frac{w_\alpha \mathbf{1}_{U(x_{\alpha}, r_{\alpha})  } }{\mu U(x_{\alpha}, r_{\alpha})} \right\|_p 
\ge
 \left\|  \frac{ \mathbf{1}_{U(x, T)} }{\mu
(U(x, T))^{1/p}  }  \sum_{\alpha \in I_1}\frac{w_\alpha \mathbf{1}_{U(x_{\alpha}, r_{\alpha})} }{\mu U(x_{\alpha}, r_{\alpha})} \right\|_p 
$$
$$
\ge
 \left\|  \frac{ \mathbf{1}_{U(x, T)} }{\mu
(U(x, T))}  \sum_{\alpha \in I_1}\frac{w_\alpha \mathbf{1}_{U(x_{\alpha}, r_{\alpha})} }{\mu U(x_{\alpha}, r_{\alpha})} \right\|_1
= 
\sum_{\alpha \in I_1} w_\alpha 
= \infty.
$$
\end{remark}

\begin{remark} Of course, by monotone convergence one can pass from finite to countable collections of balls,
weights and dilations, so from now on we only consider finite sequences in the statements of results.
\end{remark}

\begin{remark} As indicated in the introduction, there are several cases where the boundedness of the uncentered  maximal operator
is known to hold without the measure being doubling, often using instead strong uniformity assumptions. For instance, it is shown in \cite[Theorem 1a]{Io} that for non-compact symmetric
spaces, $M^u$ is bounded from $L^q$ to $L^q$ in the sharp range of exponents $q\in (2, \infty]$
(not surprisingly, the centered operator $M$ has better boundedness properties, cf. \cite{Str}).
On the other hand, local comparability (a rather mild uniformity condition) of the measure 
on a geometrical doubling metric space does imply the weak type $(1,1)$ of 
$M^u$ (cf. \cite[Theorem 5.1]{Al1}, and \cite{Al2} for additional information on local comparability).
\end{remark}

Next we present a more general version of Theorem \ref{normalizedBoman}, 
by examining the proof and assuming precisely what is needed. In particular, only
the  uncentered maximal operator localized at large radii  is required to be bounded, and the
smaller sets (not necessarily balls) only need to be contained in the corresponding (larger) balls 
(and have positive measure).

\begin{theorem}  \label{generalizedBoman}  Let $(X, d, \mu)$ be a
metric measure space, fix $1 < p < \infty$, and set $q = p/(p-1)$.  Let 
$R > 0$, let $\{w_n\}_{1 \le n \le N}$ be a 
finite sequence of weights, 
let 
$\{E_n \}_{1 \le n \le N}$ be a sequence of sets of positive measure, and let
$\{U(x_n, r_n)\}_{1 \le n \le N}$ be a  sequence of balls 
such that for $1 \le n \le N$, $r_n > R$ and $E_n \subset U(x_n, r_n)$. Then
\begin{equation}\label{overlap}
\left\| \sum_{1 \le n \le N}\frac{w_n \mathbf{1}_{U(x_{n},  r_{n})} }{\mu U(x_{n}, r_{n})} \right\|_{L^p (\mu)}
\le
\left\| M^u_{\{ R < r\}, \mu} \right\|_{L^q (\mu) \to L^q  (\mu)} 
\left\| \sum_{1 \le n \le N}\frac{w_n \mathbf{1}_{ E_n } }{\mu E_n } \right\|_{L^p (\mu)}.
\end{equation}
\end{theorem}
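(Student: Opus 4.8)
The plan is to mimic the proof of Theorem \ref{normalizedBoman} essentially verbatim, using duality and the localized uncentered maximal operator, and checking that the two places where the original argument invoked balls-inside-balls and the full maximal operator can be replaced by the weaker hypotheses $E_n \subset U(x_n,r_n)$ and $r_n > R$. As before, if $\|M^u_{\{R<r\},\mu}\|_{L^q\to L^q} = \infty$ there is nothing to prove, so assume it is finite. Fix $h \ge 0$ with $\|h\|_q = 1$ and expand
\[
\int \left( \sum_{1 \le n \le N}\frac{w_n \mathbf{1}_{U(x_{n}, r_{n})} }{\mu U(x_{n}, r_{n})} \right) h \, d\mu
= \sum_{1 \le n \le N}\frac{w_n}{\mu U(x_{n}, r_{n})} \int h \, \mathbf{1}_{U(x_{n}, r_{n})} \, d\mu .
\]

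The key step is the analogue of the inequality in (\ref{overlappf1}). For each fixed $n$ and every point $x \in E_n$, the ball $U(x_n, r_n)$ is a ball of radius $r_n > R$ that contains $x$ (since $E_n \subset U(x_n, r_n)$); hence by the definition of the localized uncentered maximal operator $M^u_{\{R<r\},\mu}$, the average $\frac{1}{\mu U(x_n,r_n)}\int_{U(x_n,r_n)} h \, d\mu$ is one of the quantities over which the supremum defining $M^u_{\{R<r\},\mu} h(x)$ is taken, so it is bounded above by $M^u_{\{R<r\},\mu} h(x)$. Taking the infimum over $x \in E_n$ gives
\[
\frac{w_n}{\mu U(x_{n}, r_{n})} \int h \, \mathbf{1}_{U(x_{n}, r_{n})} \, d\mu
\le w_n \inf_{x \in E_n} M^u_{\{R<r\},\mu} h(x)
\le \frac{w_n}{\mu E_n} \int_{E_n} M^u_{\{R<r\},\mu} h \, d\mu ,
\]
where the last step uses only that $E_n$ has positive measure, so that the average of $M^u_{\{R<r\},\mu} h$ over $E_n$ dominates its infimum over $E_n$.

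Summing over $n$, recognizing the resulting sum as $\int \bigl( \sum_n \frac{w_n \mathbf{1}_{E_n}}{\mu E_n} \bigr) M^u_{\{R<r\},\mu} h \, d\mu$, and applying H\"older's inequality yields
\[
\int \left( \sum_{1 \le n \le N}\frac{w_n \mathbf{1}_{U(x_{n}, r_{n})} }{\mu U(x_{n}, r_{n})} \right) h \, d\mu
\le \left\| \sum_{1 \le n \le N}\frac{w_n \mathbf{1}_{E_n}}{\mu E_n} \right\|_p \left\| M^u_{\{R<r\},\mu} h \right\|_q
\le \left\| M^u_{\{R<r\},\mu} \right\|_{L^q \to L^q} \left\| \sum_{1 \le n \le N}\frac{w_n \mathbf{1}_{E_n}}{\mu E_n} \right\|_p .
\]
Taking the supremum over all such $h$ and using $\|f\|_p = \sup_{\|g\|_q=1} \int f g$ gives (\ref{overlap}). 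There is no real obstacle here; the only point requiring a moment's care is verifying that the radius condition $r_n > R$ is exactly what makes $U(x_n,r_n)$ an admissible ball in the supremum defining the localized operator, and that the original proof used the ball containment $E_n \subset U(x_n,r_n)$ only to pass from a pointwise bound on $E_n$ to an average over $E_n$ — both of which are immediate from the hypotheses as stated.
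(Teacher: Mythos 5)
Your proposal is correct and follows essentially the same route as the paper: duality with a fixed $h\ge 0$, $\|h\|_q=1$, the pointwise bound of each ball average by $M^u_{\{R<r\},\mu}h$ at points of $E_n$ (using $E_n\subset U(x_n,r_n)$ and $r_n>R$), then domination of the infimum by the average over $E_n$, H\"older, and the supremum over $h$. The only cosmetic difference is that the paper passes through $\inf_{x\in U(x_n,r_n)}$ before restricting to $E_n$, whereas you take the infimum over $E_n$ directly, which is the same estimate.
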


\begin{proof}   Arguing as before, we note that for
$1 \le n \le N$, given $h\ge 0$ with 
$\|h\|_q = 1$, we have
\begin{equation}\label{overlappfgen}
\frac{w_n }{\mu U(x_{n}, r_{n})} \int h \mathbf{1}_{U(x_{n},  r_{n})}  \ d\mu
\le 
 w_n \inf_{x\in U(x_{n}, r_{n})} M^u_{\{ R < r\}, \mu} h(x)  
  \end{equation}
\begin{equation}\label{overlappfgen2}
\le
 w_n \inf_{x\in E_n  } M^u_{\{ R < r\}, \mu} h(x)  
 \le  \frac{w_n }{\mu E_n  } \int M^u_{\{ R < r\}, \mu} h \mathbf{1}_{E_n  }  \ d\mu.
 \end{equation}
 \end{proof}
 
 \begin{remark} Note that if we replace $\left\| M^u_{\{ R < r\}, \mu} \right\|_{L^q (\mu) \to L^q  (\mu)} 
$ in (\ref{overlap})  
  by $\left\| M^u_{\mu} \right\|_{L^q (\mu) \to L^q  (\mu)} 
$, then the reference to $R > 0$ in the preceding theorem can be eliminated.
\end{remark}

Under stronger hypotheses,
and up to some constants,  it is possible to reverse the inequality in Boman's covering lemma. Note that the doubling condition
entails not only that the usual maximal functions are bounded, but boundedness also holds
for the larger operators 
obtained by integrating over expanded balls.

\begin{definition}\label{maxfun} Let $(X, d, \mu)$ be a metric measure space and let $g$ be  a locally integrable function 
on $X$. Given $t \ge 1$ we set, for each  $x\in X$, \begin{equation}\label{HLMFct}
M^t_{\mu} g(x) := \sup _{\{r > 0  :  \ 0 < \mu (B^{cl}(x, r))\}}  \frac{1}{\mu
(B^{cl}(x, r))} \int _{B^{cl}(x, t r)}  |g| d\mu.
\end{equation}
The corresponding  uncentered version  is defined via
\begin{equation}\label{HLMFu}
M^{u t}_{\mu} g(x) := \sup _{\{r > 0, y\in X :  \  d(x,y) < t r \mbox{ and } \mu B^{cl}(y, r)  > 0\}} 
\frac{1}{\mu
(B^{cl}(y, r))} \int _{B^{cl}(y, t r)}  |g (u)| d\mu (u).
\end{equation}
\end{definition}

It is well known that if $\mu$ is doubling with constant $C$, then $M^u$ is of weak type
$(1,1)$ with constant $C^3$ (by a $5B$-covering argument, where the radii of balls
is expanded by 5). Likewise,  $M$ is of weak type
$(1,1)$ with constant $C^2$, by a $3B$-covering argument. By interpolation, it follows that 
if $1 < p < \infty$, then
 $$
\|M^{u }_{\mu}\|_{L^p\to L^p} 
 \le  \frac{C^{3/p}p }{p-1}
\ \ \  \text{ and  } \ \ \ 
 \|M_{\mu}\|_{L^p\to L^p} 
 \le  \frac{C^{2/p}p }{p-1}
 $$ 
(cf. \cite[p. 46, Exercise 1.3.3 (a)]{Gra}).

\begin{lemma}  \label{largemax}  Let $(X, d, \mu)$ be a
metric measure space, where $\mu$ is doubling with constant $C$. Fix $t >1$ and let $k\ge 0$ be the
unique 
integer that satisfies $2^{k - 1} < t \le 2^k$. Then  $M^{u t}_{\mu}$ is of
weak type $(1,1)$ with constant $C^{3 + k}$, and  $M^{ t}_{\mu}$ is of
weak type $(1,1)$ with constant $C^{2 + k}$. Hence 
$
\|M^{u t}_{\mu}\|_{L^p\to L^p} 
 \le  \frac{p }{p-1} C^{(3 + k) /p},
 $
and 
$ \|M^t_{\mu}\|_{L^p\to L^p} 
 \le  \frac{p }{p-1} C^{(2 + k)/p}.
 $ 
 \end{lemma}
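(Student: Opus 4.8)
The plan is to rerun the classical covering proofs of the weak type $(1,1)$ inequalities for $M^{u}_{\mu}$ (a $5B$-argument) and for $M_{\mu}$ (a $3B$-argument), keeping track of what the extra dilation $t$ inside the integration ball costs, and then to obtain the strong $(p,p)$ statements by Marcinkiewicz interpolation exactly as in the case $t=1$ recalled just above the lemma. The numerical heart of the matter is the elementary observation that, since $2^{k-1}<t\le 2^{k}$ and $5<2^{3}$, $3<2^{2}$, one has the \emph{strict} inequalities $5tr<2^{k+3}r$ and $3tr<2^{k+2}r$; hence a \emph{closed} ball of radius $5tr$ (resp. $3tr$) is contained in an \emph{open} ball of radius $2^{k+3}r$ (resp. $2^{k+2}r$), so the ordinary doubling condition can be iterated $k+3$ times (resp. $k+2$ times) to pass from radius $r$ to that open ball. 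This is exactly what produces the exponents $3+k$ and $2+k$, and it is also the one place where I would be careful about open versus closed balls.

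For $M^{ut}_{\mu}$: fix $g\in L^{1}(\mu)$ and $\alpha>0$. For each $x$ with $M^{ut}_{\mu}g(x)>\alpha$ choose a witnessing pair $(y_{x},r_{x})$, so $d(x,y_{x})<tr_{x}$, $\mu B^{cl}(y_{x},r_{x})>0$ and $\mu B^{cl}(y_{x},r_{x})<\alpha^{-1}\int_{B^{cl}(y_{x},tr_{x})}|g|\,d\mu$; since $x\in B^{cl}(y_{x},tr_{x})$, these closed balls cover $\{M^{ut}_{\mu}g>\alpha\}$. After the standard reduction to balls of bounded radius (write the level set as an increasing union over a bound on $r_{x}$), apply the $5B$-covering lemma to extract a disjoint subfamily $\{B^{cl}(y_{i},tr_{i})\}_{i}$ whose $5$-fold dilates still cover. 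Then
\[
\mu(\{M^{ut}_{\mu}g>\alpha\})\ \le\ \sum_{i}\mu B^{cl}(y_{i},5tr_{i})\ \le\ \frac{C^{3+k}}{\alpha}\sum_{i}\int_{B^{cl}(y_{i},tr_{i})}|g|\,d\mu\ \le\ \frac{C^{3+k}}{\alpha}\|g\|_{L^{1}(\mu)},
\]
where the second inequality uses $B^{cl}(y_{i},5tr_{i})\subset B(y_{i},2^{k+3}r_{i})$, $k+3$ applications of doubling, and $\mu B^{cl}(y_{i},r_{i})<\alpha^{-1}\int_{B^{cl}(y_{i},tr_{i})}|g|$, while the last uses disjointness of the selected balls. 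This is the asserted weak type $(1,1)$ bound with constant $C^{3+k}$.

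For $M^{t}_{\mu}$ the argument is identical with $3B$ in place of $5B$: the relevant balls $B^{cl}(x,tr_{x})$ are now centered at the point they must cover, the greedy $3B$-covering lemma (used, as usual for the centered operator, after reducing to a finite subfamily) yields a disjoint subfamily $\{B^{cl}(x_{i},tr_{i})\}_{i}$ whose $3$-fold dilates cover, and $B^{cl}(x_{i},3tr_{i})\subset B(x_{i},2^{k+2}r_{i})$ together with $k+2$ applications of doubling and disjointness gives the weak type $(1,1)$ bound with constant $C^{2+k}$. Feeding these weak $(1,1)$ bounds into Marcinkiewicz interpolation exactly as in the passage, recalled above, from the weak bounds for $M^{u}$ and $M$ to their $L^{p}$ bounds yields the stated strong type $(p,p)$ estimates. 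The only genuinely delicate points are the measurability and regularity issues packaged into ``reduce to a bounded or finite subfamily'', which are handled precisely as in the classical proofs of the weak $(1,1)$ inequalities for $M^{u}$ (using $\tau$-additivity and the lower semicontinuity of the uncentered maximal function when open balls are used) and for $M$; I would simply cite those. (Alternatively, one may skip repeating the covering argument altogether by noting the pointwise inequalities $M^{ut}_{\mu}g\le C^{k}M^{u}_{\mu}g$ and $M^{t}_{\mu}g\le C^{k}M_{\mu}g$ — a consequence of $\mu B^{cl}(y,tr)\le\mu B^{cl}(y,2^{k}r)\le C^{k}\mu B^{cl}(y,r)$ — and invoking the known bounds for $M^{u}$ and $M$.)
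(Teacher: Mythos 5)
Your covering argument is correct, but note that the paper's entire proof is the one line you relegate to your closing parenthesis: the pointwise dominations $M^{ut}_{\mu}g\le C^{k}M^{u}_{\mu}g$ and $M^{t}_{\mu}g\le C^{k}M_{\mu}g$ (exactly as you derive them, from $\mu B^{cl}(y,tr)\le C^{k}\mu B^{cl}(y,r)$), combined with the weak $(1,1)$ constants $C^{3}$ and $C^{2}$ for $M^{u}$ and $M$ quoted just before the lemma. (In the paper both displayed inequalities are written with $M^{u}_{\mu}$ on the right; this is evidently a slip, since only domination by the centered operator yields the exponent $2+k$, so your formulation is the intended one.) Your main route --- rerunning the $5B$ and $3B$ covering arguments and paying for the dilation via $5t<2^{k+3}$ and $3t<2^{k+2}$ --- is a legitimate, self-contained alternative giving the same weak constants $C^{3+k}$ and $C^{2+k}$; what it buys is independence from the quoted $t=1$ results, at the cost of re-addressing the measurability and finite-subfamily reductions that the paper sidesteps by citing the known case.

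One caveat, which you inherit from the lemma's statement rather than introduce: the passage from weak type $(1,1)$ with constant $A$ to $\|T\|_{L^p\to L^p}\le \frac{p}{p-1}A^{1/p}$ uses $\|T\|_{L^\infty\to L^\infty}\le 1$, and this fails for $M^{ut}_{\mu}$ and $M^{t}_{\mu}$: the average is taken over $B^{cl}(\cdot,tr)$ but normalized by $\mu B^{cl}(\cdot,r)$, so one only has $\|M^{ut}_{\mu}\|_{L^\infty\to L^\infty}\le C^{k}$. Interpolation therefore yields $\frac{p}{p-1}C^{3/p+k}$ and $\frac{p}{p-1}C^{2/p+k}$ (the same constants the paper's pointwise-domination proof gives), not the stated $\frac{p}{p-1}C^{(3+k)/p}$. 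Indeed for Lebesgue measure on $\mathbb{R}^{d}$, where $C=2^{d}$, and $t=2$ (so $k=1$), one has $M^{u2}_{\mu}g=2^{d}M^{u}_{\mu}g$, hence $\|M^{u2}_{\mu}\|_{L^p\to L^p}\ge 2^{d}$, which exceeds $\frac{p}{p-1}2^{4d/p}$ for large $p$. So do not claim that the stated $L^p$ constants follow ``exactly as before''; the honest conclusion of your argument (and of the paper's) is the bound with exponent $3/p+k$, resp.\ $2/p+k$.
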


\begin{proof} This is immediate from the pointwise inequalities
$M^{u t}_{\mu}  g (x) \le
C^k  M^{u}_{\mu}  g (x)$ and $M^{ t}_{\mu}  g (x) \le
C^k  M^{u}_{\mu}  g (x)$, for every locally integrable $g$.
\end{proof} 

Since for any $t \ge 1$,
\begin{equation*}
\frac{1}{\mu U(x, r)} \int h \mathbf{1}_{U(x,  r)}  \ d\mu
\le 
  \inf_{x\in U(x , t r)} M^{u t }_{\mu} h(x),  
  \end{equation*}
with the same proof as before, we get the following

\begin{theorem}  \label{rBoman} {\bf (Reverse Boman)} Let $(X, d, \mu)$ be a
metric measure space, with $\mu$ doubling,  fix $1 < p < \infty$,  and set $q = p/(p-1)$.  Let $\{w_n\}_{1 \le n \le N}$ be a 
finite   sequence of weights,  let
$\{t_n\}_{1 \le n \le N}$ be a 
  sequence of dilations, with  $1 \le t_n \le t$,
and let $\{U(x_n, r_n)\}_{1 \le n \le N}$ be a sequence of balls. Then
\begin{equation}\label{roverlap1}
\left\| \sum_{1 \le n \le N}\frac{w_n \mathbf{1}_{U(x_{n},   r_{n})} }{\mu U(x_{n}, r_{n})} \right\|_p
\le
\left\|M^{ u t }_{\mu} \right\|_{L^q (\mu) \to L^q  (\mu)} 
\left\| \sum_{1 \le n \le N}\frac{w_n \mathbf{1}_{U(x_{n},  t_n r_{n})} }{\mu U (x_{n}, t_n r_{n})} \right\|_p.
\end{equation}
\end{theorem}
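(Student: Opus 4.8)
The plan is to transcribe, essentially line for line, the duality argument that proves Theorems~\ref{normalizedBoman} and~\ref{generalizedBoman}, now with the operator $M^{ut}_\mu$ playing the role that $M^u_\mu$ (or $M^u_{\{R<r\},\mu}$) played there, and using the pointwise estimate displayed just before the statement in place of the one exploited in those proofs. If $\|M^{ut}_\mu\|_{L^q(\mu)\to L^q(\mu)}=\infty$ there is nothing to prove; since $\mu$ is doubling, Lemma~\ref{largemax} guarantees that this norm is finite, so the inequality is not vacuous.

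First I would recall that $\|f\|_p=\sup_{\|h\|_q=1}\int f h$, observe that the left-hand side of~(\ref{roverlap1}) is a nonnegative function, and fix $h\ge 0$ with $\|h\|_q=1$, so that
\[
\int\Bigl(\sum_{1\le n\le N}\frac{w_n\mathbf{1}_{U(x_n,r_n)}}{\mu U(x_n,r_n)}\Bigr)h\,d\mu
=\sum_{1\le n\le N}\frac{w_n}{\mu U(x_n,r_n)}\int h\,\mathbf{1}_{U(x_n,r_n)}\,d\mu .
\]
For each $n$ I would then apply the estimate quoted before the theorem, with radius $r_n$ and dilation $t_n$, in the form $\frac{1}{\mu U(x_n,r_n)}\int h\,\mathbf{1}_{U(x_n,r_n)}\,d\mu\le\inf_{x\in U(x_n,t_nr_n)}M^{ut_n}_\mu h(x)$, followed by the elementary monotonicity $M^{us}_\mu h\le M^{ut}_\mu h$ valid for $1\le s\le t$: any pair $(y,\rho)$ admissible in the supremum defining $M^{us}_\mu h(x)$ is admissible for $M^{ut}_\mu h(x)$, since $d(x,y)<s\rho\le t\rho$, and enlarging the domain of integration from $B^{cl}(y,s\rho)$ to $B^{cl}(y,t\rho)$ can only increase the average of the nonnegative $h$. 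Bounding the infimum by the average of $M^{ut}_\mu h$ over $U(x_n,t_nr_n)$, multiplying by $w_n$ and summing yields
\[
\sum_{1\le n\le N}\frac{w_n}{\mu U(x_n,r_n)}\int h\,\mathbf{1}_{U(x_n,r_n)}\,d\mu
\le\int\Bigl(\sum_{1\le n\le N}\frac{w_n\mathbf{1}_{U(x_n,t_nr_n)}}{\mu U(x_n,t_nr_n)}\Bigr)M^{ut}_\mu h\,d\mu .
\]
Hölder's inequality, the bound $\|M^{ut}_\mu h\|_q\le\|M^{ut}_\mu\|_{L^q\to L^q}$, and taking the supremum over all admissible $h$ then finish the proof.

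Since the argument copies the earlier proofs line by line, I do not anticipate any real obstacle. The two points that deserve a moment's care are: the verification of the pointwise inequality preceding the statement --- here one takes $U(x_n,r_n)$ itself as the averaging ball and expands it to $U(x_n,t_nr_n)$, which is a legitimate competitor in the supremum defining $M^{ut_n}_\mu h(z)$ for every $z\in U(x_n,t_nr_n)$, by the open/closed conventions of Section~2 --- and the use of the uniform bound $t_n\le t$, which is precisely what lets the single operator $M^{ut}_\mu$ dominate all the operators $M^{ut_n}_\mu$ that arise along the way.
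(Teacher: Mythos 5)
Your proposal is correct and follows essentially the same route as the paper: the paper proves Theorem \ref{rBoman} by invoking the displayed pointwise bound $\frac{1}{\mu U(x,r)}\int h\,\mathbf{1}_{U(x,r)}\,d\mu \le \inf_{z\in U(x,tr)}M^{ut}_\mu h(z)$ and rerunning the duality/H\"older argument of Theorems \ref{normalizedBoman} and \ref{generalizedBoman} verbatim. Your only (harmless) variation is to pass through $M^{ut_n}_\mu$ and the monotonicity $M^{ut_n}_\mu h\le M^{ut}_\mu h$, whereas the paper uses $M^{ut}_\mu$ directly together with the inclusion $U(x_n,t_nr_n)\subset U(x_n,tr_n)$.
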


\begin{example} Note that the bounds given in (\ref{roverlap1}) must necessarily grow
with $t$, as can be seen even from the simplest example: 
Consider Lebesgue measure $\lambda$ on $\mathbb{R}$; let $w_i = 1$, and 
$t_i = t = 2^k$. Then
$$
\left\|\frac{\mathbf{1}_{B(0, 1)} }{\lambda B(0, 1)} \right\|_p
=
2^{-1/q} 
= 
2^{k/q}  \left\|\frac{\mathbf{1}_{B(0, 2^k)} }{\lambda B(0, 2^k)} \right\|_p.$$
 \end{example}

Recall that both the centered and the uncentered
maximal functions are lower semicontinuous; in the case of the uncentered maximal function
this is obvious; in the centered case it follows from the fact that if
$c < \frac{ 1}{\mu B^{cl}(x, r)} \int_{B^{cl}(x, r)} |f| d\mu  $, then there exists a $t > 1$
such that   $c < \frac{ 1}{\mu B^{cl}(x, t r)} \int_{B^{cl}(x, r)} |f| d\mu  $. 

\vskip .2 cm

Next we
relax  the boundedness  hypothesis on the uncentered operator $M^u$ 
 to the boundedness of the centered operator 
$M$. In exchange, the result is  no longer obtained for arbitrary
contractions $t < 1$ on the right hand side: they must be ``small enough". 

\begin{theorem}  \label{centeredBoman}  Let $(X, d, \mu)$ be a
metric measure space, fix $1 < p < \infty$, and set $q = p/(p-1)$.  Let $\{B^{cl}(x_n, r_n)\}_{1 \le n \le N}$ be a sequence of closed 
balls with centers in the support of
$\mu$. If 
$\|M_{\mu} \|_{L^q (\mu) \to L^q  (\mu)} < \infty$, then
there exists a numerical sequence 
$\{T_n\}_{1 \le n \le N}$, with  $0 < T_n < 1$ depending  on $\{B^{cl}(x_n, r_n)\}_{1 \le n \le N}$, such that whenever $0 < t_n \le T_n$, 
for every sequence of weights
 $\{w_n\}_{1 \le n \le N}$, we have  

\begin{equation}\label{coverlap}
\left\| \sum_{1 \le n \le N}\frac{w_n \mathbf{1}_{B^{cl} (x_{n}, r_{n})} }{\mu B^{cl}(x_{n}, r_{n})} \right\|_p
\le
2
\left\|M_{\mu} \right\|_{L^q (\mu) \to L^q  (\mu)} 
\left\| \sum_{1 \le n \le N}\frac{w_n \mathbf{1}_{B^{cl}(x_{n},   t_n  r_{n})} }{\mu B^{cl} (x_{n},  t_n  r_{n})} 
\right\|_p.
\end{equation} 
\end{theorem}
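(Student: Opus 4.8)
The plan is to run the same duality argument as in the proof of Theorem~\ref{normalizedBoman}, but since only the centered operator is available we cannot move the center of a ball for free; instead we slightly enlarge it. The elementary fact to exploit is the inclusion $B^{cl}(x_n,r_n)\subset B^{cl}(x,(1+t_n)r_n)$, valid whenever $x\in B^{cl}(x_n,t_nr_n)$, so that the average of a nonnegative $h$ over $B^{cl}(x_n,r_n)$ is at most $\big(\mu B^{cl}(x,(1+t_n)r_n)/\mu B^{cl}(x_n,r_n)\big)\,M_\mu h(x)$, and $B^{cl}(x,(1+t_n)r_n)$ is a ball centered at $x$. The only thing to arrange is that, once $t_n$ is small enough, this volume ratio is bounded by $2$ for every admissible $x$, \emph{uniformly in $h$ and in the weights}.

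To fix the thresholds, observe that for each $n$ one has $\bigcap_{s>1}B^{cl}(x_n,sr_n)=B^{cl}(x_n,r_n)$, while $\mu B^{cl}(x_n,2r_n)<\infty$, so continuity of $\mu$ from above yields an $s_n>1$ with $\mu B^{cl}(x_n,s_nr_n)\le 2\,\mu B^{cl}(x_n,r_n)$. Set $T_n:=\min\{1/2,(s_n-1)/2\}$, which depends only on the ball $B^{cl}(x_n,r_n)$ and satisfies $0<T_n<1$. Since $x_n$ lies in the support of $\mu$, every open ball around $x_n$ has positive measure, hence $\mu B^{cl}(x_n,r_n)>0$ and $\mu B^{cl}(x_n,t_nr_n)>0$ for every $t_n>0$, so all the quotients below are meaningful. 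If $0<t_n\le T_n$ and $x\in B^{cl}(x_n,t_nr_n)$, two applications of the triangle inequality give $B^{cl}(x_n,r_n)\subset B^{cl}(x,(1+t_n)r_n)\subset B^{cl}(x_n,(1+2t_n)r_n)\subset B^{cl}(x_n,s_nr_n)$, whence $\mu B^{cl}(x,(1+t_n)r_n)\le 2\,\mu B^{cl}(x_n,r_n)$.

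Next I would argue by duality. Put $f:=\sum_n w_n\mathbf{1}_{B^{cl}(x_n,r_n)}/\mu B^{cl}(x_n,r_n)\ge 0$ and fix $h\ge 0$ with $\|h\|_q=1$; then $h$ is locally integrable, so $M_\mu h$ is defined and lower semicontinuous, and $M_\mu h\in L^q$ because $\|M_\mu\|_{L^q\to L^q}<\infty$. For each $n$ and each $x\in B^{cl}(x_n,t_nr_n)$, enlarging the domain of integration and using the volume bound just obtained,
\[
\frac{1}{\mu B^{cl}(x_n,r_n)}\int_{B^{cl}(x_n,r_n)}h\,d\mu\le\frac{\mu B^{cl}(x,(1+t_n)r_n)}{\mu B^{cl}(x_n,r_n)}\,M_\mu h(x)\le 2\,M_\mu h(x).
\]
Taking the infimum over $x\in B^{cl}(x_n,t_nr_n)$ and then averaging over that set,
\[
\frac{1}{\mu B^{cl}(x_n,r_n)}\int_{B^{cl}(x_n,r_n)}h\,d\mu\le\frac{2}{\mu B^{cl}(x_n,t_nr_n)}\int_{B^{cl}(x_n,t_nr_n)}M_\mu h\,d\mu.
\]
Multiplying by $w_n$, summing over $n$, and then applying H\"older's inequality followed by $\|M_\mu\|_{L^q\to L^q}<\infty$ gives $\int f h\,d\mu\le 2\,\|M_\mu\|_{L^q\to L^q}\,\big\|\sum_n w_n\mathbf{1}_{B^{cl}(x_n,t_nr_n)}/\mu B^{cl}(x_n,t_nr_n)\big\|_p$, exactly as in Theorems~\ref{normalizedBoman} and~\ref{generalizedBoman}. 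Taking the supremum over all such $h$, and recalling $\|f\|_p=\sup_{h\ge0,\ \|h\|_q=1}\int fh\,d\mu$, yields \eqref{coverlap} (if the right-hand norm is infinite there is nothing to prove).

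The only genuine point, and the reason the contractions must be ``small enough'', is the choice of the $T_n$: it must not depend on $h$ (nor on the weights, nor on $p$), and this is precisely what the continuity-from-above step secures, since $s_n$ is determined by the single ball $B^{cl}(x_n,r_n)$ alone. Everything else is the bookkeeping already carried out in the earlier proofs; the conceptual change is that the freedom to recenter a ball, supplied by the uncentered operator, is here replaced by the freedom to dilate it by the fixed factor $s_n$, at the cost of the constant $2$ and of the restriction $t_n\le T_n<1$.
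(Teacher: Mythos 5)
Your proof is correct and follows essentially the same route as the paper: use continuity from above on closed balls to fix, independently of $h$ and the weights, an expansion factor under which the measure of $B^{cl}(x_n,r_n)$ at most doubles, then run the duality/H\"older argument of Theorem~\ref{normalizedBoman} with recentering replaced by a small dilation, picking up the factor $2$. Your choice $T_n=\min\{1/2,(s_n-1)/2\}$ and the explicit chain $B^{cl}(x_n,r_n)\subset B^{cl}(x,(1+t_n)r_n)\subset B^{cl}(x_n,(1+2t_n)r_n)\subset B^{cl}(x_n,s_nr_n)$ is just a slightly more careful bookkeeping of the same idea the paper implements with $\delta_n$ and $T_n=\delta_n/r_n$.
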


\begin{proof} Fix $h\ge 0$ with 
$\|h\|_q = 1$, select $0 < \delta_n < r_n$ so that 
$\mu B^{cl} (x_{n},  \delta_n +  r_{n}) < 2 \mu B^{cl} (x_{n},   r_{n})$ 
 (here we use the fact that balls are assumed to be closed) 
and set $T_n := \delta_n/r_n$.
 Then, for all $0 < t_n \le T_n$, and all $y\in B^{cl} (x_{n},   t_{n} r_n) \subset B^{cl} (x_{n},   \delta_{n})$,
$$
\frac{1}{\mu B^{cl} (x_{n},  r_{n})} \int h \mathbf{1}_{B^{cl} (x_{n},  r_{n})}  \ d\mu
<
 \frac{2 }{\mu B^{cl} (x_{n}, (1 + t_n) r_{n})} \int h \mathbf{1}_{B^{cl} (x_{n},  r_{n})}  \ d\mu
$$
$$
\le
 \frac{2 }{\mu B^{cl} (y,  (1 + d(y, x_n) ) r_{n})} \int h \mathbf{1}_{B^{cl} (y,  (1 + d(y, x_n) ) r_{n})}  \ d\mu
 \le
2 M h(y).
$$
Therefore
\begin{equation}\label{overlappf}
\int \left( \sum_{1 \le n \le N}\frac{w_n \mathbf{1}_{B^{cl} (x_{n},  r_{n})} }{\mu B^{cl} (x_{n},  r_{n})}
 \right) h \ d\mu
 =
\sum_{1 \le n \le N}\frac{w_n }{\mu B^{cl} (x_{n}, r_{n})} \int h \mathbf{1}_{B^{cl} (x_{n},   r_{n})}  \ d\mu
\end{equation}
 \begin{equation}\label{overlappf1}
 \le 
2  \sum_{1 \le n \le N} w_n \inf_{x\in B^{cl} (x_{n}, t_n r_{n})} M h(x)  
 \le
2  \sum_{1 \le n \le N}\frac{w_n }{\mu B^{cl} (x_{n}, t_n r_{n})} \int (M h) 
 \mathbf{1}_{B^{cl} (x_{n}, t_n  r_{n})}  \ d\mu
 \end{equation}
\begin{equation}\label{overlappf2}
=
2 \int \left( \sum_{1 \le n \le N}
\frac{w_n \mathbf{1}_{B^{cl} (x_{n}, t_n  r_{n})} }{\mu B^{cl} (x_{n}, t_n  r_{n})} \right) M h \ d\mu
\le
2 \left\| \sum_{1 \le n \le N}\frac{w_n \mathbf{1}_{B^{cl} (x_{n}, t_n r_{n})} }{\mu B^{cl} (x_{n}, t_n  r_{n})} \right\|_p
\left\|M_{\mu} h \right\|_q.
\end{equation}
\end{proof}

\begin{remark} Regarding Theorem \ref{centeredBoman} in the special 
case of Lebesgue measure $\lambda^d$ in $\mathbb{R}^d$,  with different distances (and hence different  balls)  defined by norms,
 it is
known for some balls
if $p > 1$, and for all balls  
  if $p > 3/2$, that $\left\|M_{\mu} \right\|_{L^p (\mu) \to L^p  (\mu)}$ can be
given bounds  that are independent of the dimension, see \cite{DeGuMa} and the references contained therein.

 For instance, if $p=2$, by \cite[Theorem 5.2]{DeGuMa}, we have  $\left\|M_{\mu} \right\|_{L^2 (\mu) \to L^2  (\mu)} < 140$, where the constant $140$ is 
independent of $d$, and of the norm considered. Thus, (\ref{coverlap}) becomes
\begin{equation}\label{coverlapLeb}
\left\| \sum_{1 \le n \le N}\frac{w_n \mathbf{1}_{B^{cl} (x_{n}, r_{n})} }{\lambda^d B^{cl}(x_{n}, r_{n})} \right\|_2
\le
280 
\left\| \sum_{1 \le n \le N}\frac{w_n \mathbf{1}_{B^{cl}(x_{n},   t_n  r_{n})} }{\lambda^d B^{cl} (x_{n},  t_n  r_{n})} 
\right\|_2, 
\end{equation}
and if $p \in (1,2)$, then 280 can be replaced by $2 (140)^{2/q}$ by a simple application of Jensen's inequality (cf. 
\cite[Theorem 2.10]{Al1} for more details). Note however that the $T_n$'s must be very
small, depending on the dimension $d$. For instance, for large values of $d$ and every
$n$,  we can take $T_n = 1/(2 d)$, since $(1 + 1/(2 d))^d \approx e^{1/2} < 2$.

Note also that the constant $2 = 1+1$ in (\ref{coverlap})  can
be replaced by $1 + \varepsilon$, for any fixed $\varepsilon > 0$, simply by choosing smaller
values of $T_n$.
\end{remark}

\section{Applications to maximal operators}

\begin{definition}\label{Bomanprop} Let $1 \le p \le \infty$. We say that the   metric measure 
space $(X, d, \mu)$ 
has the {\em  $p$-Boman covering  property} (or $p$-Boman  overlapping property) with constant $C > 0$,  if for all
finite sequences of closed
balls   $\{B^{cl}(x_n, r_n)\}_{1 \le n \le N}$ with $x_n$ in the support of $\mu$,   of weights $\{w_n\}_{1 \le n \le N}$,  $w_n >0$,  
and of dilations  $\{t_n\}_{1 \le n \le N}$, $t_n \ge 1$, 
\begin{equation}\label{overlapC}
\left\| \sum_{1 \le n \le  N}\frac{w_n \mathbf{1}_{B^{cl}(x_{n},  t_n r_{n})} }{\mu B^{cl}(x_{n}, t_n r_{n})} \right\|_p
\le
C
\left\| \sum_{1 \le n \le  N}\frac{w_n \mathbf{1}_{B^{cl}(x_{n}, r_{n})} }{\mu B^{cl}(x_{n}, r_{n})} \right\|_p.
\end{equation}
The    generalized $p$-Boman covering property  with constant $C > 0$,  
is defined analogously, but with measurable sets of positive measure replacing the smaller balls.
More precisely,  {\em the generalized $p$-Boman  covering property} holds
if for all
finite sequences of closed
balls   $\{B^{cl}(x_n, r_n)\}_{1 \le n \le N}$ with $x_n$ in the support of $\mu$,   of weights $\{w_n\}_{1 \le n \le N}$,  $w_n >0$,  
and of   sets $\{E_n\}_{1 \le n \le N}$ with  $\mu E_n > 0$ and 
$E_n \subset  B^{cl}(x_{n},   r_{n})$,
we have  
\begin{equation}\label{goverlapC}
\left\| \sum_{1 \le n \le  N}\frac{w_n \mathbf{1}_{B^{cl}(x_{n},   r_{n})} }{\mu B^{cl}(x_{n}, r_{n})} \right\|_p
\le
C
\left\| \sum_{1 \le n \le  N}\frac{w_n \mathbf{1}_{ E_n } }{\mu E_n } \right\|_p.
\end{equation}
\end{definition}

\begin{remark} 
Observe that the generalized  $1$-Boman overlapping property always holds, with $C = 1$, since
\begin{equation}
\left\| \sum_{1 \le n \le  N}\frac{w_n \mathbf{1}_{B^{cl}(x_{n},  r_{n})} }{\mu B^{cl}(x_{n},  r_{n})} \right\|_1
=
\sum_{1 \le n \le  N} w_n
=
\left\| \sum_{1 \le n \le N}\frac{w_n \mathbf{1}_{ E_n } }{\mu E_n } \right\|_1.
\end{equation}

On the other hand, the  $\infty$-Boman overlapping property will rarely hold (it holds if $X$ is finite, for
instance). It fails for very well behaved spaces, such as the Lebesgue probability $\lambda$ in $[0,1]$, as we show next.
For $1 \le n \le N$, set $ B^{cl}(x_{n}, r_{n}) = [2^{-n}, 2^{-n + 1} ]$, $w_n = 2^{-n} = \lambda B^{cl}(x_{n}, r_{n}) $, and $t _n = 3$.
Since $x_n = 3 \times 2^{-n - 1}$ and $r_n = 2^{-n - 1}$, 
$$
\left\| \sum_{1 \le n \le  N}\frac{w_n \mathbf{1}_{B^{cl}(x_{n},  t_n r_{n})} }{\lambda B^{cl}(x_{n}, t_n r_{n})} \right\|_\infty
= \frac{N}{3},
$$
while
$$
\left\| \sum_{1 \le n \le  N}\frac{w_n \mathbf{1}_{B^{cl}(x_{n}, r_{n})} }{\lambda B^{cl}(x_{n}, r_{n})} \right\|_\infty
= 1.
$$
Thus,  the $p$-Boman overlapping property and its generalized versions are interesting mostly for $1 < p < \infty$.
\end{remark}

 In \cite[p. 73]{HKST} the conclusion of the Vitali covering theorem is turned into a definition as follows:

\begin{definition}\label{vitali} 
We say that   $(X, d, \mu)$    is a {\em Vitali} metric measure space 
 if  for all  $A \subset  X$  and 
every covering $\mathcal{ B}$ of $A$ 
by closed balls satisfying
$$
\sup_{x\in A} \inf\{r > 0: B^{cl} (x,r) \in  \mathcal{ B} \} = 0,
$$
there
exists a  disjoint subcollection  $\mathcal{ C}\subset \mathcal{ B}$  such that
$$
\mu (A\setminus \cup  \mathcal{ C})  = 0.
$$
\end{definition}
 
Of course, if $\mu$ is doubling then $X$ is Vitali, but the latter is a much weaker hypothesis. For instance, it holds 
for volume on a Riemannian manifold (cf. \cite[Examples p. 76-77]{HKST} for this 
and for other examples).

A standard application of H\"older's inequality leads us from the generalized $q$-Boman covering property to the
weak type $(p,p)$ of the centered maximal operator $M$, 
and from the $q$-Boman covering property to the
weak type $(p,p)$ of $M$
for Vitali metric measure spaces. 
As we shall see below,
the $q$-Boman covering property is strictly weaker than the strong $(p,p)$ type of $M^u$. 
Thus, proving the $q$-Boman covering property in cases where the boundedness of $M^u$ is not
known, or it is known to fail, may yield new results for the centered operator.

 But while this property
ought to be easier to obtain than the strong type of the maximal operator, since it is weaker, in fact
it is not obvious  how to do so. Hence, we define weaker properties, in the line of the
conclusion of Theorem  \ref{centeredBoman}, which still yield the above results.

\begin{definition}\label{weakBomanprop} Let $1 \le p \le \infty$. We say that the   metric measure space $(X, d, \mu)$ 
has the {\em  weak $p$-Boman covering property}   with constant $C > 0$,  if for every
$r>0$ there exists a $T = T(r) \in (0,1)$ such that for all 
finite sequences of closed
balls   $\{B^{cl}(x_n, r_n)\}_{1 \le n \le N}$ with $x_n$ in the support of $\mu$,   of weights $\{w_n\}_{1 \le n \le N}$,  $w_n >0$,  
and of contractions   $\{t_n\}_{1 \le n \le N}$, $0 < t_n \le T(r_n)$, we have
\begin{equation}\label{woverlapC}
\left\| \sum_{1 \le n \le  N}\frac{w_n \mathbf{1}_{B^{cl}(x_{n},   r_{n})} }{\mu B^{cl}(x_{n}, r_{n})} \right\|_p
\le
C
\left\| \sum_{1 \le n \le  N}\frac{w_n \mathbf{1}_{B^{cl}(x_{n}, t_n  r_{n})} }{\mu B^{cl}(x_{n}, t_n  r_{n})} \right\|_p.
\end{equation}
The generalized weak $p$-Boman  covering property is defined analogously, with
sets of positive measure contained in the smaller balls replacing the latter.
\end{definition}

\begin{theorem}\label{M}  Let $1 < p < \infty$ and let 
 $(X, d, \mu)$ be a metric measure space satisfying a generalized weak $q$-Boman covering property with constant $C$. 
Then the centered maximal operator
$M$ is of weak type $(p,p)$, with $\|M\|_{L^p\to L^{p, \infty}} \le C$.
 \end{theorem}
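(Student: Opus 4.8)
The plan is to establish the weak type $(p,p)$ inequality for $M$ by a standard ``covering plus $q$-Boman overlap'' argument, using the generalized weak $q$-Boman covering property as a black box. Fix $g \in L^p(\mu)$ and $\alpha > 0$, and set $E_\alpha := \{Mg > \alpha\}$. The first step is to reduce to showing $\mu(K) \le (C \|g\|_p / \alpha)^p$ for an arbitrary compact (or bounded Borel) subset $K \subset E_\alpha$, since $E_\alpha$ is open (by lower semicontinuity of $M$, noted in the excerpt) and $\mu$ is locally finite and $\tau$-additive, so inner approximation by such sets suffices; one can further reduce to a finite subcollection of balls by compactness together with the $\tau$-additivity/local finiteness of $\mu$.

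For each $x \in K$, since $Mg(x) > \alpha$, there is a closed ball $B^{cl}(x, s_x)$ with $\frac{1}{\mu B^{cl}(x,s_x)} \int_{B^{cl}(x,s_x)} |g| \, d\mu > \alpha$; after shrinking we may also demand that the \emph{radius} $s_x$ be as small as we like. Here is where the weak $q$-Boman property enters: that property assigns, to each radius $r$, a threshold $T(r) \in (0,1)$, and we will want the balls that actually realize the average to be the \emph{contracted} balls $B^{cl}(x, t_x r_x)$. So I would pick, for each $x \in K$, a ``large'' ball $B^{cl}(x, r_x)$ and then a contraction factor $t_x \le T(r_x)$ such that the \emph{small} ball $B^{cl}(x, t_x r_x)$ still has average of $|g|$ exceeding $\alpha$ — this is possible because the average over $B^{cl}(x,\rho)$ tends to the correct limit as $\rho$ is chosen appropriately, and in any case one can simply take the ball witnessing $Mg(x) > \alpha$ to \emph{be} the small ball $B^{cl}(x, t_x r_x)$, choosing $r_x$ so large (equivalently $t_x$ so small) that $t_x \le T(r_x)$; the radii $r_x$ range over a set for which $T$ is defined, and since $T(r) > 0$ for every $r$, this is always achievable. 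Extract a finite subcover $K \subset \bigcup_{n=1}^N B^{cl}(x_n, r_n)$ of $K$ by the \emph{large} balls, with associated contractions $t_n \le T(r_n)$ and small balls $B^{cl}(x_n, t_n r_n)$ satisfying $\int_{B^{cl}(x_n, t_n r_n)} |g|\, d\mu > \alpha \, \mu B^{cl}(x_n, t_n r_n)$.

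Now apply the generalized weak $q$-Boman covering property with weights $w_n := 1$ (or $w_n := \mu B^{cl}(x_n,t_n r_n)$, whichever makes the bookkeeping cleanest) to get
$$
\left\| \sum_{n=1}^N \frac{\mathbf{1}_{B^{cl}(x_n, r_n)}}{\mu B^{cl}(x_n, r_n)} \right\|_q \le C \left\| \sum_{n=1}^N \frac{\mathbf{1}_{B^{cl}(x_n, t_n r_n)}}{\mu B^{cl}(x_n, t_n r_n)} \right\|_q.
$$
Since $K \subset \bigcup_n B^{cl}(x_n,r_n)$, we have $\mathbf{1}_K \le \sum_n \mathbf{1}_{B^{cl}(x_n,r_n)} \le \mu B^{cl}(x_n,r_n) \cdot \sum_n \frac{\mathbf{1}_{B^{cl}(x_n,r_n)}}{\mu B^{cl}(x_n,r_n)}$ pointwise in a crude sense; more precisely, using $\mu B^{cl}(x_n,r_n) \ge \mu B^{cl}(x_n, t_n r_n)$ is not directly what we want, so instead estimate $\mu(K)^{1/q} \le \|\sum_n \mathbf{1}_{B^{cl}(x_n,r_n)}\|_q$ and bound the latter by $(\max_n \mu B^{cl}(x_n,r_n)) \cdot \|\sum_n \frac{\mathbf{1}_{B^{cl}(x_n,r_n)}}{\mu B^{cl}(x_n,r_n)}\|_q$ — or, cleaner, test the left side directly: $\mu(K) \le \int \sum_n \mathbf{1}_{B^{cl}(x_n,r_n)} \, d\mu$, which by duality with a suitable $h$ is controlled by the $L^q$ norm above. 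Then, on the right-hand side, use Hölder to peel off the small balls: for the function $h := \sum_n \frac{\mathbf{1}_{B^{cl}(x_n, t_n r_n)}}{\mu B^{cl}(x_n, t_n r_n)}$ one writes $\|h\|_q \le $ (duality pairing) and invokes the defining inequality $\alpha \, \mu B^{cl}(x_n, t_n r_n) < \int_{B^{cl}(x_n,t_n r_n)}|g|\,d\mu$ to replace $\frac{\mathbf{1}_{B^{cl}(x_n,t_n r_n)}}{\mu B^{cl}(x_n,t_n r_n)}$ by $\frac{1}{\alpha}|g|\mathbf{1}_{B^{cl}(x_n,t_n r_n)}$ under the integral sign, so that after summing and using that the small balls have bounded overlap via the pairing, one arrives at $\mu(K)^{1/q} \le \frac{C}{\alpha} \|g\|_p$ (the exponent arithmetic: $p/q = p-1$, so $\mu(K)^{1-1/p} \le \frac{C}{\alpha}\|g\|_p$ gives $\mu(K) \le (C\|g\|_p/\alpha)^p$). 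Taking the supremum over compact $K \subset E_\alpha$ finishes the proof, with constant $C$ as claimed.

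The main obstacle I anticipate is the middle bookkeeping step — namely, organizing the Hölder/duality argument so that the overlap of the small balls $B^{cl}(x_n, t_n r_n)$ is absorbed correctly and the factor that emerges is exactly $C$ rather than $C$ times some overlap constant. The trick (this is the ``standard application of Hölder's inequality'' the authors allude to before Theorem \ref{M}) is to pair $\sum_n \frac{\mathbf{1}_{B^{cl}(x_n,t_n r_n)}}{\mu B^{cl}(x_n,t_n r_n)}$ against a test function $h \ge 0$, $\|h\|_p = 1$, and to \emph{not} expand the sum naively but rather bound $\int \frac{\mathbf{1}_{B^{cl}(x_n,t_n r_n)}}{\mu B^{cl}(x_n,t_n r_n)} h \, d\mu \le \inf_{B^{cl}(x_n,r_n)} Mh$ — wait, that would reintroduce $M$; the correct route is simpler: use the $q$-Boman inequality to transfer everything to the large-ball side, bound $\mathbf{1}_K$ by that side, and on the small-ball side use only the pointwise substitution $\mu B^{cl}(x_n,t_n r_n) < \frac{1}{\alpha}\int_{B^{cl}(x_n,t_n r_n)}|g|$ together with a single further application of Hölder and the fact that $\sum_n \mathbf{1}_{B^{cl}(x_n, t_n r_n)}$ can be taken with bounded overlap (or, even better, one does not need bounded overlap if one is slightly more careful and uses the weights $w_n$ to normalize). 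Getting this chain to close with the sharp constant is the delicate point; everything else is routine.
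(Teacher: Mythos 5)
Your proposal has the roles of the two families of balls reversed, and with that orientation the argument cannot close. In the paper's proof the ball $B^{cl}(x,r)$ \emph{witnessing} $Mf(x)>a$ (i.e.\ carrying the average $\frac{1}{\mu B^{cl}(x,r)}\int_{B^{cl}(x,r)}|f|\,d\mu>a$) is the \emph{large} ball of the Boman pair, and one then contracts it by some $t_x\le T(r)$ so that the small ball $B^{cl}(x,t_xr)$ lies inside the open set $\{Mf>a\}$ (openness comes from the lower semicontinuity of $M$, and any sufficiently small $t_x$ works, so the constraint $t_x\le T(r)$ is harmless). You instead take the witness ball to \emph{be} the contracted ball and try to choose the expansion $r_x$ so that $t_x=s_x/r_x\le T(r_x)$; since the definition gives no control whatsoever on $T(\cdot)$ (it may decay to $0$ arbitrarily fast as $r\to\infty$), ``$T(r)>0$ for every $r$'' does not make this achievable, and your fallback ``the average tends to the correct limit as $\rho$ is chosen appropriately'' presupposes a differentiation property that is not assumed in Theorem \ref{M}. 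Worse, even granting the selection, the bookkeeping runs the wrong way: after transferring from the large-ball sum to the small-ball sum via the Boman inequality, the only available information on the small balls is $\mu B^{cl}(x_n,t_nr_n)<\alpha^{-1}\int_{B^{cl}(x_n,t_nr_n)}|g|\,d\mu$, which gives a \emph{lower} bound for $\mathbf{1}_{B^{cl}(x_n,t_nr_n)}/\mu B^{cl}(x_n,t_nr_n)$, not the upper bound you need; and your appeal to ``bounded overlap'' of the small balls is exactly the covering-lemma input (Besicovitch/Vitali) that is unavailable here and that the Boman property is meant to replace.

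The missing device is the combination of disjointification and the choice of weights, which is why the \emph{generalized} property (arbitrary positive-measure subsets of the contracted balls) is assumed. The paper works with $E:=\{Mf>a\}\cap\operatorname{supp}\mu$ (so centers lie in the support, and $\mu E=\mu\{Mf>a\}$ by $\tau$-additivity), uses $\tau$-additivity — not inner regularity by compact sets, which need not hold here — to pick finitely many contracted balls $B^{cl}(x_i,t_ir_i)\subset\{Mf>a\}$ whose union has measure at least $(1-\varepsilon)\mu\{Mf>a\}$, disjointifies them into sets $E_i$, and applies the generalized weak $q$-Boman property with $w_i:=\mu E_i$, so that the right-hand side is exactly $\bigl\|\sum_i\mathbf{1}_{E_i}\bigr\|_q=\bigl(\sum_i\mu E_i\bigr)^{1/q}\le\mu\{Mf>a\}^{1/q}$ with no overlap constant at all, while on the left the average condition on the \emph{large} balls replaces $\mu B^{cl}(x_i,r_i)$ by $a^{-1}\int|f|\mathbf{1}_{B^{cl}(x_i,r_i)}$ and one application of H\"older produces $a^{-1}\|f\|_p$ times the large-ball $L^q$ norm; dividing by $\mu\{Mf>a\}^{1/q}$ and letting $\varepsilon\downarrow0$ gives the bound with constant exactly $C$. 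As written, your proposal neither makes the admissible choice of contractions nor closes the H\"older/Boman chain, so it has a genuine gap rather than being an alternative proof.
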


 \begin{proof} Fix $\varepsilon > 0$, let $a > 0$ be such that $\mu \{M f > a\} > 0$, and let $f\in L^p (\mu)$. 
Now let $E :=  \{M f > a\} \cap 
\operatorname{supp}\mu$, where  $\operatorname{supp}\mu$ denotes 
 the support of $\mu$. Since
the complement of
the support $(\operatorname{supp}\mu)^c := \cup \{ B(x, r): x \in X, \mu B(x,r) = 0\}$
has measure zero by $\tau$-additivity, $\mu E =  \mu \{M f > a\}$.

 For each $x\in E$, select $B^{cl}(x,r)$  such that $a \mu  B^{cl}(x,r) < \int_{B^{cl}(x,r)} |f| \ d\mu$, 
and choose  $T_x\in (0, T(r)]$ with $B^{cl}(x, T_x r) \subset \{M f > a\}$. Define
$\mathcal{ B}:= \{B(x,t_x r) : x\in E, 0 < t_x < T_x \} $. By $\tau$-additivity, there is a finite
subcollection $\mathcal{ C} = \{ B (x_i ,t_i r_i): 1 \le  i \le N\}$ such that 
$$
(1 - \varepsilon) \mu E 
= (1 - \varepsilon) \mu \cup  \mathcal{ B} 
< 
\mu \cup \mathcal{ C} 
\le
\mu \cup_{i= 1}^N B^{cl} (x_i ,t_i r_i).
$$
Furthermore, we may assume that for each $1 \le i \le N$, 
$\mu B^{cl} (x_i ,t_i r_i) \setminus \cup_{j\ne  i}^N B^{cl} (x_j ,t_j r_j) > 0$,
for otherwise we simply disregard whichever ball is almost contained in the union of
the others, and this does not change the measure of the finite  union. 
Define the disjointifications $E_1 = B^{cl} (x_1 ,t_1 r_1)$, and for $j > 1$, 
$E_j =  B^{cl} (x_j ,t_j r_j) \setminus \cup_{i= 1}^{j - 1} B^{cl} (x_i ,t_i r_i)$. 
It follows from the generalized weak $q$-Boman covering property, with 
$w_i := \mu E_i >0$, that
$$
(1 - \varepsilon) \mu  \{M f > a\}
\le
\mu \cup_{i= 1}^N B^{cl} (x_i ,t_i r_i) 
=
 \sum_{i= 1}^N \mu E_i
$$
$$
= \sum_{i= 1}^N 
 \frac{ \mu E_i }{ \mu B^{cl} (x_i ,  r_i)}\int_X  \mathbf{1}_{  B^{cl} (x_i ,  r_i)}  d \mu
 \le
 \sum_{i=1}^N  \frac{ \mu E_i }{ \mu B^{cl} (x_i ,  r_i)} 
  \frac{1}{a }\int_X  |f| \mathbf{1}_{  B^{cl} (x_i ,  r_i)}  d \mu
 $$
$$
=
 \frac{1}{a }\int_X  |f| \sum_{i=1}^N  \frac{ \mu E_i }{ \mu B^{cl} (x_i ,  r_i)} \mathbf{1}_{  B^{cl} (x_i ,  r_i)}  d \mu
\le
 \frac{1}{a }\left\|f\right\|_p \left\| \sum_{i=1}^N  \frac{ \mu E_i }{ \mu B^{cl} (x_i ,  r_i)} \mathbf{1}_{  B^{cl} (x_i ,  r_i)}  
\right\|_q
$$
$$
\le
\frac{C}{a }\left\|f\right\|_p \left\| \sum_{i=1}^N \mathbf{1}_{  E_i }  
\right\|_q
\le 
\frac{C}{a }\left\|f\right\|_p \mu \{M f > a\}^{1/q}.
$$
Dividing both sides of 
$$
(1 - \varepsilon)  \mu \{M f > a\}
\le 
\frac{C}{a }\left\|f\right\|_p \mu \{M f > a\}^{1/q}.
$$  by  $\mu \{M f > a\}^{1/q}$ and letting $\varepsilon \downarrow 0$ we obtain the result. 
 \end{proof}
 
Next we indicate the changes needed in the proof when only the weak $q$-Boman covering
property is assumed, but in exchange $X$ is taken to be Vitali.

\begin{theorem}\label{VM}  Let $1 < p < \infty$ and let 
 $(X, d, \mu)$ be a Vitali  metric measure space satisfying a weak $q$-Boman covering property with constant $C$. 
Then the centered maximal operator
$M$ is of weak type $(p,p)$, with $\|M\|_{L^p\to L^{p, \infty}} \le C$.
 \end{theorem}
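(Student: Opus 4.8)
The plan is to reproduce the proof of Theorem \ref{M} essentially verbatim, the single substitution being that the combination of $\tau$-additivity with a disjointification — which in Theorem \ref{M} produces smaller sets $E_j$ that are \emph{not} balls, and thereby forces one to invoke the \emph{generalized} weak $q$-Boman covering property — is now replaced by a single application of the Vitali covering property. Its virtue is that it delivers a \emph{disjoint} family of \emph{balls}; since the smaller sets are then disjoint balls, no disjointification is required and the plain (un-generalized) weak $q$-Boman covering property is enough.

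Concretely, fix $f\in L^p(\mu)$ and $a>0$ with $\mu\{Mf>a\}>0$, and put $E:=\{Mf>a\}\cap\operatorname{supp}\mu$, so that $\mu E=\mu\{Mf>a\}$ just as in Theorem \ref{M}. The set $\{Mf>a\}$ is open, since $Mf$ is lower semicontinuous. For each $x\in E$ choose $r_x>0$ with $a\,\mu B^{cl}(x,r_x)<\int_{B^{cl}(x,r_x)}|f|\,d\mu$, and then choose $T_x\in(0,T(r_x)]$ small enough that $B^{cl}(x,T_x r_x)\subset\{Mf>a\}$; this is possible because $\{Mf>a\}$ is open and hence contains $B^{cl}(x,s)$ for all sufficiently small $s>0$. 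The family $\mathcal B:=\{B^{cl}(x,t r_x):x\in E,\ 0<t\le T_x\}$ then covers $E$ and satisfies $\sup_{x\in E}\inf\{s>0:B^{cl}(x,s)\in\mathcal B\}=0$, so the Vitali property yields a disjoint subfamily $\mathcal C\subset\mathcal B$ with $\mu(E\setminus\cup\mathcal C)=0$; in particular $\mu\cup\mathcal C\ge\mu E=\mu\{Mf>a\}$.

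Now fix an arbitrary $\beta<\mu\{Mf>a\}$. Since the balls in $\mathcal C$ are pairwise disjoint, we may select a finite subfamily $\{B^{cl}(x_i,t_i r_i):1\le i\le N\}\subset\mathcal C$ (write $r_i:=r_{x_i}$) with $\sum_{i=1}^N\mu B^{cl}(x_i,t_i r_i)>\beta$; here $x_i\in\operatorname{supp}\mu$, the balls $B^{cl}(x_i,t_i r_i)$ are pairwise disjoint, $0<t_i\le T(r_i)$, and $a\,\mu B^{cl}(x_i,r_i)<\int_{B^{cl}(x_i,r_i)}|f|\,d\mu$. Put $w_i:=\mu B^{cl}(x_i,t_i r_i)$, which is positive because $x_i\in\operatorname{supp}\mu$. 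Exactly as in the proof of Theorem \ref{M} (with the disjointifications $E_i$ there replaced by our disjoint balls $B^{cl}(x_i,t_i r_i)$, and the generalized property replaced by its un-generalized counterpart), the defining inequality of $r_i$ and H\"older's inequality give $\beta<\sum_i w_i\le\frac1a\|f\|_p\big\|\sum_i\frac{w_i\mathbf{1}_{B^{cl}(x_i,r_i)}}{\mu B^{cl}(x_i,r_i)}\big\|_q$; the weak $q$-Boman covering property (applicable since $0<t_i\le T(r_i)$) bounds the last factor by $C\big\|\sum_i\frac{w_i\mathbf{1}_{B^{cl}(x_i,t_i r_i)}}{\mu B^{cl}(x_i,t_i r_i)}\big\|_q=C\big\|\sum_i\mathbf{1}_{B^{cl}(x_i,t_i r_i)}\big\|_q=C\big(\sum_i w_i\big)^{1/q}$, the last equality by disjointness. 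Writing $V:=\sum_i w_i>\beta$, we have obtained $V\le\frac Ca\|f\|_p\,V^{1/q}$, hence $V^{1/p}\le\frac Ca\|f\|_p$, hence $\beta<V\le(C\|f\|_p/a)^p$. As $\beta<\mu\{Mf>a\}$ was arbitrary, $\mu\{Mf>a\}\le(C\|f\|_p/a)^p$, i.e. $\|M\|_{L^p\to L^{p,\infty}}\le C$.

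The only point that demands any thought beyond the proof of Theorem \ref{M} is the construction of the Vitali cover $\mathcal B$: for each center $x$ the contraction parameter must be kept simultaneously below the threshold $T(r_x)$ imposed by the weak $q$-Boman property and small enough that the contracted ball still lies in the open set $\{Mf>a\}$, while $B^{cl}(x,r_x)$ — the \emph{un}contracted ball — continues to carry the averaged bound $a\,\mu B^{cl}<\int|f|$. Once $\mathcal B$ is set up this way, the Vitali property performs precisely the role played by $\tau$-additivity together with the disjointification in Theorem \ref{M}, and all the remaining estimates are identical to those there.
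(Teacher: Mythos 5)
Your proof is correct and follows essentially the same route as the paper's: the same Vitali cover by contracted closed balls $B^{cl}(x,t r_x)$ with $t\le T(r_x)$ and $B^{cl}(x,T_x r_x)\subset\{Mf>a\}$, a disjoint subfamily, a finite selection, the weights $w_i=\mu B^{cl}(x_i,t_i r_i)$, and then H\"older's inequality combined with the (un-generalized) weak $q$-Boman property. The only difference is cosmetic bookkeeping: you track $V=\sum_i w_i$ against an arbitrary $\beta<\mu\{Mf>a\}$ instead of the paper's $(1-\varepsilon)\mu\{Mf>a\}$ followed by division by $\mu\{Mf>a\}^{1/q}$, which if anything handles the case $\mu\{Mf>a\}=\infty$ slightly more cleanly.
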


 \begin{proof} With the same notation as in the previous proof, for 
 each $x\in E$ we select $B^{cl}(x,r)$  such that $a \mu  B^{cl}(x,r) < \int_{B^{cl}(x,r)} |f| \ d\mu$, 
and choose  $T_x\in (0, T(r)]$ with $B^{cl}(x, T_x r) \subset \{M f > a\}$. Define
$\mathcal{ B}:= \{B^{cl}(x,t_x r) : x\in E, 0 < t_x < T_x \} $, so $\mathcal{ B}$ is a Vitali cover of $E$.  Let 
$\mathcal{ C}\subset \mathcal{ B}$ be a (finite or countable)  disjoint subcollection with
$\mu (E\setminus \cup  \mathcal{ C})  = 0$, say, 
$\mathcal{ C} = \{ B^{cl} (x_i ,t_i r_i):  i \in I\}$, where either  $I = \{1, \dots , n\}$ for some 
$n \ge 1$, or
$I = \mathbb{N}\setminus \{0\}$. By countable additivity there is a finite  $N\ge 1$ such that
$$
(1 - \varepsilon) \mu E 
< 
\mu \cup_{i= 1}^N B^{cl} (x_i ,t_i r_i) = \sum_{i= 1}^N \mu B^{cl} (x_i ,t_i r_i) ,
$$
and now, taking  $w_i := \mu B^{cl} (x_i ,t_i r_i) >0$, the rest of the argument is as before.  \end{proof}

\begin{remark} We do not know if the Vitali assumption can be removed from the preceding
theorem. It can be relaxed, however, to the assumption that the Lebesgue differentiation
theorem holds for the functions in $L^p$, by \cite{Ha}.
\end{remark}

After some preparation, we show that there exists a metric measure space where the uncentered maximal operator
satisfies no strong bounds for $1 < q < \infty$, and still $p$-Boman holds for every $p\in [1, \infty)$.

\begin{definition} A Borel measure $\mu$ on $(X,d)$ is {\em doubling at large scales}, or  doubling localized at radii larger
than $R$,   if there exists an $R > 0$ and a 
$C \ge 1 $ such that for all $r>R $ and all $x\in X$, $\mu (B(x, 2 r)) \le C\mu(B(x,r)) < \infty$. 
\end{definition}

\begin{definition} A Borel measure $\mu$ on $(X,d)$ is {\em doubling on a subset} $E \subset X$,
  if there exists  a 
$C \ge 1 $ such that for all  $x\in E$, $\mu (B(x, 2 r)) \le C\mu(B(x,r)) < \infty$. 
\end{definition}

The next example will be used later, in Theorem \ref{overlapnotdoubling}.

\begin{example} \label{locdoub} {\em There exists a   Vitali  metric measure space 
$(X, d, \mu)$ such that   for each $1 \le p < \infty$,  $M^u$ is not of weak type $(p,p)$, 
 but $\mu$ it is doubling localized at radii larger than $R > 0$, for
any such $R$.  In fact, if $R\ge 2^{-n}$, we can take $C = 2^{3 + n} $.}

\vskip .2 cm

The set $X$ will be a suitable subset of $\mathbb{R}^2$ with distance defined by the
$\ell_1$ norm: Given $x = (x_1, x_2)$ and $y = (y_1, y_2)$, $d(x,y) = \|x -y\|_1 = |x_1 - y_1| + |x_2 - y_2|$. By the Besicovitch covering theorem, $X$ is Vitali. 

\vskip .2 cm

Let $A_n := [4 n, 4 n + 2^{-n}] \times \{2^{-n}\}$, and let  $A := (-\infty, \infty) \times \{0\}$ be  the $x$ axis.
Set $X := A \cup (\cup_{n \ge 0} A_n)$, and define $\mu$ restricted to $A$ as linear Lebesgue measure $\lambda$,
while $\mu$ restricted to $A_n$ is $ 2^{-n} \lambda$, the 
 linear Lebesgue measure scaled by a factor of $ 2^{-n}$; thus,  
$\mu A_n =  4^{-n}$. Clearly the failure of doubling occurs at small radii: 
$\mu B_1^{cl}((4 n,  2^{-n}),  2^{-n}) =  4^{-n}$, while
 $\mu B_1^{cl}((4 n,  2^{-n}),  2^{-n + 1}) =  4^{-n} + 2^{1 -n}$. 
 
 \vskip .2 cm

Lack of  doubling  also follows from the fact  that $M^u_\mu$ is not of strong type $(p,p)$ for
any $1 < p < \infty$ (and hence, it is not of weak type $(p,p)$ either). Since $\|\mathbf{1}_{A_n} \|_p^p =  4^{-n}$,
and $M^u_\mu \mathbf{1}_{A_n}  \equiv 1 $ on $[4 n, 4 n + 2^{-n}] \times \{0\}$, we have
    $\|M^u_\mu \mathbf{1}_{A_n}\|^p_p > 2^{n} \|\mathbf{1}_{A_n} \|_p^p $.
    
\vskip .2 cm

We prove by induction on $n$, where $R \ge 2^{-n}$, that 
$\mu$ is doubling in the large. 
For every $x\in A$, 
$$
\mu B(x, 2r) = \mu (A \cap B(x, 2r)) +  \mu (\cup_{n \ge 0} A_n \cap B(x, 2r))
$$
$$
\le 
 2 \mu (A \cap B(x, 2r)) 
= 4   \mu (A \cap B(x, r)) \le  4 \mu (B(x, r)),
$$
so on $A$, $\mu$ is actually doubling, and we only need to consider balls with centers in  
$\cup_{n \ge 0} A_n$;  this is always assumed from now on. 
 Denote by $\pi_1 (a,b) := (a,0)$ the projection onto the first coordinate, and observe that for every
$x \in \cup_{n \ge 0} A_n$, 
\begin{equation}\label{double}
\mu B(x, 2r) \le \mu ( B(\pi_1 x, 2r)) \le 8r.
\end{equation}
Now the claim to be shown  is that if $R \ge  2^{-n}$,  then $C = C(R) \ge \mu B(x, 2r) / \mu ( B(x, r)) $ can be taken to be equal to
 $ 2^{3 + n}$.
Note  that if  $ R \ge  2^{-n}$, then there is no loss of generality in assuming that $R =   2^{-n}$,
since more radii are being considered in this case.

Let $n= 0$, so $r > R=1$. 
If $x\in A_N$, $N > 0$, then
$$
\mu B(x, r) 
\ge
 \mu (A \cap B(x, r)) 
\ge 2 r - 1 \ge r;
$$
hence,  we can take $C= 2^{3 + 0}$.

And if $x\in A_0$,  for $r \ge 2$,
$$
\mu B(x, r) 
\ge
 \mu (A \cap B(x, r)) 
\ge 2 r - 2 \ge r,
$$
while if  $1 < r < 2$,  then
$$
\mu B(x, 2r) \le 1 + 4 r - 2 < 4r,
$$
and
$
\mu B(x, r) 
\ge 1 > r/2,
$
so once again we can take $C= 2^{3 + 0}$.

The general case $ R \ge  2^{-n}$ is obtained in a similar fashion. If $r >  2^{-n + 1 }$ we take  $ R =  2^{-n + 1}$
and apply the induction hypothesis, which we assume for $n - 1 \ge 0$,  and prove for $n$. 
As before, we may suppose that   $ R =  2^{-n}$ and
 $2^{-n + 1 } \ge  r >  2^{-n}$. Let  $x\in A_N$. 
The restriction of $\mu$ to $A_N$ is doubling with constant 2, so 
 if $N < n - 1$, then $
\mu B(x, 2r) \le 2 \mu (B(x, r)).$  
If $N = n-1$,  then
$
\mu B(x, r) 
\ge
 2^{- n + 1} r,
$
if 
 $N = n$,  then
$
\mu B(x, r) 
\ge
 2^{- n } r,
$
and if $N > n$,  then
$$
\mu B(x, r) 
\ge
 \mu (A \cap B(x, r)) 
\ge 2 r - 2^{-n} >  r,
$$
so it follows from (\ref{double}) that in every case  $ \mu B(x, 2r) / \mu ( B(x, r))  \le 2^{3 + n}$.
 \end{example}

The standard argument from doubling to weak type $(1,1)$ yields 
the corresponding results when we localize at large scales, and
when we work with measures doubling on subsets.

\begin{theorem}\label{M^uloc}  Let 
 $(X, d, \mu)$ be a  metric measure space.   If $\mu$ is doubling at large scales, say,  localized at radii larger
than $R > 0$, with doubling constant $C = C(R)$,
then the uncentered localized maximal operator
 $M^u_{\{ R < r\}, \mu} $  is of weak type $(1,1)$ with constant $C^3$, and thus, of strong type $(p,p)$ for all $p > 1$, with constant $C =   \frac{C^{3/p}p }{p-1}.$

 Likewise, if $\mu$ is doubling on $E\subset X$, with constant  $C$, then for every $t > 0$ and every
 $g \in L^1 (\mu)$, 
 $$
 \mu (E \cap \{M^u_\mu g >  t\}) \le \frac{C^3  \|g\|_{L^1(\mu)}}{t}.
$$
Thus, if  $f \in L^p (\mu)$ for some $p > 1$, 
$$
\| \mathbf{1}_E M^u_\mu f \|_p \le \frac{C^{3/p}p }{p-1} \|f\|_p.
$$
 \end{theorem}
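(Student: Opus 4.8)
The plan is to mimic the classical Vitali-covering proof that doubling implies the weak type $(1,1)$ bound for the uncentered maximal operator, being careful only about where the doubling constant is actually used. I will treat the two assertions separately, since the first concerns the globally localized operator $M^u_{\{R<r\},\mu}$ under a large-scale doubling hypothesis, while the second concerns the pointwise restriction $\mathbf{1}_E M^u_\mu$ under doubling on $E$. In both cases the strong type $(p,p)$ conclusions follow from the weak type $(1,1)$ bound by interpolation with the trivial $L^\infty$ bound, exactly as in the passage from \cite[p.~46, Exercise 1.3.3(a)]{Gra} quoted before Lemma \ref{largemax}; so the heart of the matter is the two weak-type estimates.

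For the first statement, fix $t>0$ and $g\in L^1(\mu)$, and let $\Omega:=\{M^u_{\{R<r\},\mu}\,g>t\}$. For each $x\in\Omega$ there is a ball $B(y,r)^{cl}$ with $r>R$, $d(x,y)<r$, and $\mu(B(y,r)^{cl})^{-1}\int_{B(y,r)^{cl}}|g|\,d\mu>t$; note $x\in B(y,2r)$ and $\mu B(y,2r)\le C\,\mu B(y,r)$ precisely because $r>R$ and $\mu$ is doubling at scales larger than $R$. Now I would run a $5B$-covering argument: from the family of these selected balls $\{B(y_x,r_x)\}$, with radii bounded (WLOG we may reduce to a bounded portion of $\Omega$ of finite measure, then let it exhaust, using $\tau$-additivity to extract finite subfamilies), extract by the standard greedy selection a disjoint subfamily $\{B(y_i,r_i)\}$ such that every selected ball meets a chosen one of at least half its radius, hence is contained in $B(y_i,5r_i)$. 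Then
$$
\mu(\Omega')\le \sum_i \mu B(y_i,5r_i)\le C^3\sum_i \mu B(y_i,r_i)\le \frac{C^3}{t}\sum_i\int_{B(y_i,r_i)^{cl}}|g|\,d\mu\le \frac{C^3}{t}\|g\|_{L^1(\mu)},
$$
where the factor $C^3$ comes from iterating the large-scale doubling inequality three times (since $5r_i\le 2^3 r_i$ and each $r_i>R$), and disjointness gives the last inequality; letting $\Omega'\uparrow\Omega$ finishes it. Interpolating with the obvious bound $\|M^u_{\{R<r\},\mu}g\|_\infty\le\|g\|_\infty$ gives the stated strong $(p,p)$ constant $\tfrac{C^{3/p}p}{p-1}$.

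For the second statement the same mechanism applies, but now I only test on points of $E$: given $g\in L^1(\mu)$ and $t>0$, for $x\in E\cap\{M^u_\mu g>t\}$ pick a ball $B(y_x,r_x)^{cl}$ with $x\in B(y_x,2r_x)$ and large average; the doubling estimate $\mu B(y,2r)\le C\,\mu B(y,r)$ need only hold for the selected balls, and since $x\in E$ we may in fact arrange the center to lie in $E$ (or, more simply, observe that what we need is doubling of balls centered at points of $E$, which is the hypothesis), so the same disjointification and $5B$-swelling go through, yielding $\mu(E\cap\{M^u_\mu g>t\})\le C^3\|g\|_{L^1(\mu)}/t$; interpolation against $L^\infty$ then gives $\|\mathbf{1}_E M^u_\mu f\|_p\le \tfrac{C^{3/p}p}{p-1}\|f\|_p$. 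The main obstacle I anticipate is a bookkeeping one rather than a conceptual one: justifying the reduction from possibly-uncountable covers to finite disjoint subfamilies in a general metric measure space — here one invokes $\tau$-additivity (as is done in the proof of Theorem \ref{M}) to replace $\mu(\cup\mathcal B)$ by $\mu$ of a finite subunion up to $\varepsilon$, and only then runs the greedy $5B$-selection on that finite family — and making sure, in the localized case, that one never selects a ball of radius $\le R$, which is automatic since the maximal operator itself only admits such radii in its supremum.
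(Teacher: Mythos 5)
Your treatment of the first assertion is correct and is exactly the argument the paper has in mind (the paper offers no proof beyond invoking ``the standard argument''): cover the level set of $M^u_{\{R<r\},\mu}$ by balls of radius $>R$ with average exceeding $t$, run the $5B$ (or finite $3B$) selection, and note that the dilation $5r\le 2^3 r$ is achieved by three doublings all performed at radii $>R$, so the large-scale doubling hypothesis is enough; interpolation with the trivial $L^\infty$ bound then gives the stated strong $(p,p)$ constant. Your reduction to finite subfamilies via $\tau$-additivity is also in the spirit of how the paper handles such reductions elsewhere.

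The second assertion is where your argument has a genuine gap, located precisely at the sentence ``we may in fact arrange the center to lie in $E$ (or, more simply, observe that what we need is doubling of balls centered at points of $E$, which is the hypothesis).'' In the $5B$ argument for the uncentered operator the balls that get dilated are the selected balls $B^{cl}(y_i,r_i)$, and their centers $y_i$ are in general not in $E$; doubling on $E$ gives no control of $\mu B^{cl}(y_i,5r_i)$ in terms of $\mu B^{cl}(y_i,r_i)$. The proposed repair, replacing $B^{cl}(y,r)$ by $B^{cl}(x,2r)$ with $x\in E$, destroys the density estimate: from $\int_{B^{cl}(y,r)}|g|\,d\mu>t\,\mu B^{cl}(y,r)$ one cannot deduce a lower bound for the average over $B^{cl}(x,2r)$, since doubling at $x$ only compares $B^{cl}(x,2r)$ with smaller balls centered at $x$, and no fixed number of halvings yields a ball contained in $B^{cl}(y,r)$ when $x$ lies near the boundary of $B^{cl}(y,r)$. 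Moreover this is not a repairable bookkeeping issue: in the paper's own Example \ref{locdoub}, with $E=A$ (the $x$-axis), $\mu$ is doubling on $A$ with a small constant, yet $M^u_\mu\mathbf{1}_{A_n}\equiv 1$ on $[4n,4n+2^{-n}]\times\{0\}$ (as the paper itself notes, witnessed by balls centered on the shelf $A_n$, i.e.\ centered off $E$), so $\mu\bigl(E\cap\{M^u_\mu\mathbf{1}_{A_n}>1/2\}\bigr)\ge 2^{-n}$ while $\|\mathbf{1}_{A_n}\|_{L^1(\mu)}=4^{-n}$, which for large $n$ is incompatible with any bound of the form $C^3\|g\|_{L^1(\mu)}/t$. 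What the covering argument does yield under doubling on $E$ is the weak $(1,1)$ bound (constant $C^3$) for the uncentered operator whose admissible balls have centers in $E$, or the bound for $\mathbf{1}_E M_\mu$ with the centered operator (constant $C^2$); if you want a correct version of the second half you should prove one of these variants, and note that the application in Theorem \ref{overlapnotdoubling} only involves balls centered on $A$, so such a variant is what is actually needed there.
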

 
In  metric spaces neither the centers nor the radii of balls  are in general unique. We use the standard convention whereby
when $B$ is used as an abbreviation of $B(x,r)$, we are implicitly  keeping the same $x$ and the same $r$, and when
 $t > 0$, $t B$ is understood as $B(x, tr)$. 
 
 \begin{theorem}\label{overlapnotdoubling}  There exists a Vitali
metric measure space  $(X, d, \mu)$  satisfying a $p$-Boman covering property
for each $1 < p < \infty$, such that for all $q \in (1,  \infty)$, 
 $\left\|M^u_{\mu} \right\|_{L^q (\mu) \to L^q  (\mu)} = \infty$.
  \end{theorem}

\begin{proof} We use the Vitali metric measure space $(X, d, \mu)$  defined in example  \ref{locdoub}.  It was already noted there that 
 $M^u_\mu$ is not of strong type $(q,q)$ for
any $1 < q < \infty$.  Fix $p\in (1,\infty)$.

\vskip .2 cm

To check that for some $C > 0$ (whose value will be determined later)
\begin{equation}\label{first}
\left\| \sum_{1 \le n \le  N}\frac{w_n \mathbf{1}_{B^{cl}(x_{n},  t_n r_{n})} }{\mu B^{cl}(x_{n}, t_n r_{n})} \right\|_p
\le
C
\left\| \sum_{1 \le n \le  N}\frac{w_n \mathbf{1}_{B^{cl}(x_{n}, r_{n})} }{\mu B^{cl}(x_{n}, r_{n})} \right\|_p,
\end{equation}
we split the sum in the left hand side into smaller sums, and then we use the
triangle inequality.
Without loss of generality, we may assume that the balls 
$\{B^{cl}(x_{n}, t_n r_{n}): 1 \le n \le  N\}$ are ordered by decreasing radii $t_n r_n$.
Let $k_1$ be such that for $1 \le n \le k_1$,  $t_n r_n > 1$,  and for  $k_1 <  n \le  N$,
$t_n r_n  \le 1$.
 Note that if $k_1 < 1$ (that is, $t_n r_n \le 1$ always) then the first sum is empty and we 
 can go directly  to the
 second step, while if $k_1 = N$ (so  $t_n r_n > 1$ for every $n$) then the first sum is the whole sum
 and the proof finishes in one step.  So suppose $1 \le  k_1 <  N$. Now by Example 
  \ref{locdoub} 
and Theorem \ref{M^uloc},  the uncentered localized maximal operator
 $M^u_{\{ 2^0 < r\}, \mu} $  is of strong type $(q,q)$, with constant $C =   \frac{8^{3/q}q }{q-1}.$
 Thus
 \begin{equation}\label{ein}
I:= \left\| \sum_{1 \le n \le  k_1}\frac{w_n \mathbf{1}_{B^{cl}(x_{n},  t_n r_{n})} }{\mu B^{cl}(x_{n}, t_n r_{n})} \right\|_p
\le
 C_1
\left\| \sum_{1 \le n \le  N}\frac{w_n \mathbf{1}_{B^{cl}(x_{n}, r_{n})} }{\mu B^{cl}(x_{n}, r_{n})} \right\|_p,
\end{equation}
with $C_1 =  \frac{8^{3/q}q }{q-1}$.

Recall that  $X = A \cup (\cup_{n \ge 0} A_n)$, where 
$A_n := [4 n, 4 n + 2^{-n}] \times \{2^{-n}\}$ and $A := (-\infty, \infty) \times \{0\}$,
and the distance on $X$ is defined by  restriction of the $\ell_1$ norm on the plane.
By reordering the balls 
$\{B^{cl}(x_{n}, t_n r_{n}): k_1 < n \le  N\}$ in such a way that the ones centered at
$A$ appear first in the list, we may suppose that there exists a $k_2 \in (k_1, N]$ such
that if $k_1 < n \le k_2$, then $x_n \in A$, while if $n > k_2$, then $x\in  \cup_{n \ge 0} A_n$.
Of course, if $k_2 = N$ then the set of indices $n$ satisfying $n > k_2$ is empty, and the
proof finishes with the next step. Set
\begin{equation}\label{zwei}
II:= \left\| \sum_{ k_1 < n \le k_2}\frac{w_n \mathbf{1}_{B^{cl}(x_{n},  t_n r_{n})} }{\mu B^{cl}(x_{n}, t_n r_{n})} \right\|_p.
\end{equation}
Note that for every $y \in X$,
\begin{equation*}
\sum_{ k_1 < n \le k_2}\frac{w_n \mathbf{1}_{B^{cl}(x_{n},  t_n r_{n})} }{\mu B^{cl}(x_{n}, t_n r_{n})} 
(y) 
\le
\sum_{ k_1 < n \le k_2}\frac{w_n \mathbf{1}_{B^{cl}(x_{n},  t_n r_{n})} }{\mu B^{cl}(x_{n}, t_n r_{n})} 
(\pi_1 (y)) ,
\end{equation*}
so
\begin{equation*}
\sum_{ k_1 < n \le k_2}\frac{w_n \mathbf{1}_{B^{cl}(x_{n},  t_n r_{n})} }{\mu B^{cl}(x_{n}, t_n r_{n})} 
= 
\mathbf{1}_A
\sum_{ k_1 < n \le k_2}\frac{w_n \mathbf{1}_{B^{cl}(x_{n},  t_n r_{n})} }{\mu B^{cl}(x_{n}, t_n r_{n})} 
+
\mathbf{1}_{\cup_{n \ge 0} A_n}
\sum_{ k_1 < n \le k_2}\frac{w_n \mathbf{1}_{B^{cl}(x_{n},  t_n r_{n})} }{\mu B^{cl}(x_{n}, t_n r_{n})} 
\end{equation*}
\begin{equation*}
\le 
2 \times
\mathbf{1}_A
\sum_{ k_1 < n \le k_2}\frac{w_n \mathbf{1}_{B^{cl}(x_{n},  t_n r_{n})} }{\mu B^{cl}(x_{n}, t_n r_{n})}. 
\end{equation*}
It follows that 
\begin{equation*}
II
\le
2 
\left\| \mathbf{1}_A \sum_{ k_1 < n \le k_2}\frac{w_n \mathbf{1}_{B^{cl}(x_{n},  t_n r_{n})} }{\mu B^{cl}(x_{n}, t_n r_{n})} \right\|_p.
\end{equation*}
Now for every measurable set $E \subset X$, $\mu E \le 2 \mu ( \pi_1 E)$, so $\mu$ is doubling
on $A$, with doubling constant $C \le 8$ (the bound from (\ref{double}), which is not tight
on $A$) so by Theorem \ref{M^uloc} once more,
\begin{equation}\label{boundforII}
II
\le
2  \frac{8^{3/q}q }{q-1} \left\| \sum_{1 \le n \le  N}\frac{w_n \mathbf{1}_{B^{cl}(x_{n}, r_{n})} }{\mu B^{cl}(x_{n}, r_{n})} \right\|_p,
\end{equation}
and thus, we can take $C_2 = 2 C_1.$

Finally, if the set of indices $n$ satisfying $n > k_2$ is not empty, for each
 such $n$ the ball $B^{cl}(x_{n}, t_n r_{n})$ has radius bounded by 1 and center contained in
some $A_N$, $N\ge 1$. It follows that if $j > k_2$ and $B^{cl}(x_{n}, t_n r_{n})
\cap B^{cl}(x_{j}, t_j r_{j}) \ne \emptyset$, then $x_j \in A_N$ also.
We need to bound
\begin{equation}\label{drei}
III:= \left\| \sum_{ k_2 < n \le N}\frac{w_n \mathbf{1}_{B^{cl}(x_{n},  t_n r_{n})} }{\mu B^{cl}(x_{n}, t_n r_{n})} \right\|_p.
\end{equation}

Let $\{A_{N_1}, \dots , A_{N_s}\}$ be precisely the set of $A_n$'s that intersect
(or equivalently, contain the center) of at least one of the balls  $B^{cl}(x_{n}, t_n r_{n})$, 
$n > k_2$. For each $1 \le j \le s$, let  $\mathcal{C}_j:= \{B^{cl}(x_{j_1},  t_{j_1}  r_{j_1}), \dots , 
B^{cl}(x_{j_{s (j)}},  t_{j_{s (j)}}  r_{j_{s (j)}} )\}$ be  the collection of all balls with radius bounded by
1 and with center in $A_{N_{j}}$. Set
$$
G_j (x) := 
 \sum_{ 1 \le  i\le s (j)}\frac{w_n \mathbf{1}_{B^{cl}(x_{j_{i}},  t_{j_{i}}  r_{j_{i}})}}{\mu B^{cl}(x_{j_{i}},  t_{j_{i}}  r_{j_{i}} )} (x)
\mbox{ \ \ and \ \ } g_j (x) := 
 \sum_{ 1 \le i \le s (j)}\frac{w_n \mathbf{1}_{B^{cl}(x_{j_{i}},   r_{j_{i}})}}{\mu B^{cl}(x_{j_{i}},    r_{j_{i}} )} (x).
$$
Since for $i\ne j$ the functions $G_i$ and $G_j$ have disjoint supports,  it is enough to show that 
for a suitable constant $C_3$, we have
$
III_j^p:= \int G_j^p d\mu 
\le
C_3^p
\int g_j^p d\mu. 
$

Again we break up each of these sums $G_j$  into different smaller sums 
(not necessarily disjoint, so some balls may appear more than once).

First of all, consider the collection $\mathcal{C}_{j,1}$ of balls
$ B^{cl}(x_{j_{i}},  t_{j_{i}}  r_{j_{i}})$ in $\mathcal{C}_{j}$ such that 
$$
\mu (B^{cl}(x_{j_{i}},  t_{j_{i}}  r_{j_{i}}) \cap A) \le \mu (B^{cl}(x_{j_{i}},  t_{j_{i}}  r_{j_{i}}) \cap 
A_{N_j }).
$$
 To avoid more subscripts, we rename the balls in $\mathcal{C}_{j,1}$ as  $U_1, \dots, U_m$, and we call
 $u_1, \dots, u_m$  the corresponding
contracted balls.  Note then that $\mu (u_j \cap A) \le \mu (u_j \cap 
A_{N_j })$, since either $u_j \cap A = \emptyset$, or if $u_j \cap A \ne  \emptyset$,
then $u_j \cap  A_{N_j } = A_{N_j }$, so 
$$
\mu (u_j \cap A) \le \mu (U_j \cap A) \le \mu (U_j \cap A_{N_j })
 = \mu (A_{N_j }) = \mu (u_j \cap A_{N_j }).
 $$
Since $\mu$ on 
 $A_{N_j }$ is just a multiple of the one dimensional  Lebesgue measure $\lambda$, we have 
$$
\left\|M^u_{\mu} \right\|_{L^q (A_{N_j }) \to L^q  (A_{N_j })} \le  \frac{2^{1/q}q }{q-1},  
$$ 
for, as is well known,  $\left\|M^u_{\lambda} \right\|_{L^1 (A_{N_j }) \to L^{1, \infty}  (A_{N_j })} \le  2 $.

\

Thus,  
 \begin{equation}\label{dreieinein}
 \left\| \sum_{1 \le n \le  m}\frac{w_n \mathbf{1}_{U_{n}   } }{\mu U_{n}  } \right\|_p
\le
 \left\| \sum_{1 \le n \le  m}\frac{2 w_n  \mathbf{1}_{(U_{n}  \cap A_{N_j } )  } }{\mu  (U_{n}  \cap A_{N_j } ) } \right\|_p
\le
 \frac{2^{1/q}q }{q-1}  
  \left\| \sum_{1 \le n \le  m}\frac{2 w_n  \mathbf{1}_{(u_{n}  \cap A_{N_j } )  } }{\mu  (u_{n}  \cap A_{N_j } ) } \right\|_p
 \end{equation}
 \begin{equation}\label{dreieineinzwei}
 \le
 \frac{2^{1/q}q }{q-1}  
 \left\| \sum_{1 \le n \le  m}\frac{ 4 w_n \mathbf{1}_{u_{n}  } }{\mu u_{n}  } \right\|_p
= 
 \frac{2^{1/q} 4 q }{q-1}  
 \left\| \sum_{1 \le n \le  m}\frac{w_n \mathbf{1}_{u_{n}  } }{\mu u_{n}  } \right\|_p.
\end{equation}

Let $C_4 =  \frac{2^{1/q} 4 q }{q-1} $. Next, consider the collection  $\mathcal{C}_{j,2}$ of balls
$ B^{cl}(x_{j_{i}},  t_{j_{i}}  r_{j_{i}})$ such that 
$$
\mu (B^{cl}(x_{j_{i}},  t_{j_{i}}  r_{j_{i}}) \cap A) \ge \mu (B^{cl}(x_{j_{i}},  t_{j_{i}}  r_{j_{i}}) \cap 
A_{N_j }).
$$
We rename the balls in $\mathcal{C}_{j,2}$, ordering them by decreasing radii, as  $V_1, \dots, V_\ell$, and we call
 $v_1, \dots, v_\ell$  the corresponding
contracted balls.  Now let $k$ be such that for $1 \le i \le  k$,    $\mu (v_i \cap A) \ge \mu (v_i \cap 
A_{N_j })$, while for $k < i \le \ell$, 
 $\mu (v_i \cap A) < \mu (v_i \cap 
A_{N_j })$. Of course, it might happen that $k <1$ (when  $\mu (v_i \cap A) < \mu (v_i \cap 
A_{N_j })$ for all balls)  or that $k = \ell$  (when  $\mu (v_i \cap A) \ge \mu (v_i \cap 
A_{N_j })$ for every $1\le i \le \ell$). In either case, we just have one fewer sum to consider.

Now, regarding the balls  $V_{1}, \dots , V_k$, the situation is essentially the same
as when bounding $II$:  
 \begin{equation}\label{dreieineinzwei}
 \left\| \sum_{1 \le n \le  k}\frac{w_n \mathbf{1}_{V_{n}   } }{\mu V_{n}  } \right\|_p
\le
 \left\| \sum_{1 \le n \le  k}\frac{2 w_n  \mathbf{1}_{(V_{n}  \cap A)  } }{\mu  (V_{n}  \cap A) } \right\|_p
\le
 C_2 
  \left\| \sum_{1 \le n \le  k}\frac{2 w_n  \mathbf{1}_{(v_{n}  \cap A)  } }{\mu  (v_{n}  \cap A) } \right\|_p
 \end{equation}
 \begin{equation}\label{dreieineindrei}
 \le
 C_2
 \left\| \sum_{1 \le n \le  k}\frac{ 4 w_n \mathbf{1}_{v_{n}  } }{\mu v_{n}  } \right\|_p
= 
4 C_2
 \left\| \sum_{1 \le n \le  k}\frac{w_n \mathbf{1}_{v_{n}  } }{\mu v_{n}  } \right\|_p.
\end{equation}

As for the balls $V_{k + 1}, \dots , V_\ell$, for each $i \in (k, \ell]$ choose $t_i \in (0, 1]$ so that 
 $\mu (t_i V_i \cap A) =  \mu (t_i V_i \cap A_{N_j })$. This is always possible to do, since $\mu$ is a continuous
measure, and whenever $t_i V_i \cap A  \ne \emptyset$, we have   $ t_i V_i \cap A_{N_j } =  V_i \cap A_{N_j } = A_{N_j }$.
But now  the problem has been reduced to the preceding two cases, successively considered:
 \begin{equation}\label{last}
 \left\| \sum_{1 \le n \le  k}\frac{w_n \mathbf{1}_{V_{n}   } }{\mu V_{n}  } \right\|_p
\le
4 C_2 \left\| \sum_{1 \le n \le  k}\frac{w_n \mathbf{1}_{t_n V_{n}   } }{\mu t_n V_{n}  } \right\|_p
\le
4 C_2 C_4  
 \left\| \sum_{1 \le n \le  k}\frac{w_n \mathbf{1}_{v_{n}  } }{\mu v_{n}  } \right\|_p.
\end{equation}
Writing $C_3 := C_4 + 4 C_2 +  4 C_2  C_4$ and using the fact that the $G_i$'s are disjointly supported functions 
(hence, so are
the $g_i$'s) we have
$$
\int \left(\sum_{j=1}^s G_j\right)^p d\mu 
=
\sum_{j=1}^s \int G_j^p d\mu 
\le
\sum_{j=1}^s C_3^p \int g_j^p d\mu  
$$
$$
=
C_3^p \int \left(\sum_{j=1}^s g_j\right)^p d\mu
\le
C_3^p \left\| \sum_{1 \le n \le  N}\frac{w_n \mathbf{1}_{B^{cl}(x_{n}, r_{n})} }{\mu B^{cl}(x_{n}, r_{n})} \right\|_p^p. 
$$
Putting together the estimates for $I$, $II$, and $III$, we obtain (\ref{first}) with $C = C_1 + C_2 + C_3$.
\end{proof}

\begin{remark} Note that if the $p$-Boman covering property holds for some $p > 1$, with constant $C_p$, 
then it holds for
all the intermediate spaces $L^s(\mu)$, $1 \le s \le p$, with constant bounded by $C_p^{\frac{(s - 1) p}{s(p - 1)}}$.
This follows from
 Riesz-Thorin, applied to the operators defined by $T G = g$ on the cone of nonnegative functions of the form
$G= \sum_{1 \le n \le  N}\frac{w_n \mathbf{1}_{B^{cl}(x_{n}, r_{n})} }{\mu B^{cl}(x_{n}, r_{n})},
$ with $g = \sum_{1 \le n \le  N}\frac{w_n \mathbf{1}_{B^{cl}(x_{n},t_n  r_{n})} }{\mu B^{cl}(x_{n}, t_n  r_{n})}$
a contracted version of $G$ (so $0 < t_i \le 1$, $0 < w_i$).
\end{remark}

\end{document}